\newcommand {\R}{{\mathbb R}} 
\newcommand {\C}{{\mathbb C}} 
\newcommand {\N}{{{\mathbb N}}}
\newcommand {\B}{{\mathrm{B}}}
\newcommand{\Ce}{\mathrm{C}}
\newcommand {\D}{{\mathrm{D}}}
\newcommand {\E}{{\mathcal{E}}} 
\newcommand {\F}{{\mathcal{F}}}
\newcommand {\HT}{{\mathrm{H}}} 
\newcommand {\I}{{\mathrm{I}}}
\newcommand{\Ell}{\mathrm{L}} 
\newcommand {\La}{{\mathcal{L}}}
\newcommand{\eM}{\mathrm{M}}
\newcommand{\Ma}{{\mathcal{M}}} 
\newcommand {\Ri}{{\mathrm{R}}} 
\newcommand{\Sw}{\mathcal{S}}
\newcommand{\Se}{\mathrm{S}}
\newcommand {\W}{{\mathrm{W}}}
\newcommand{\al}{{\alpha}}
\newcommand{\ud}{\mathrm{d}} 
\newcommand{\ue}{\mathrm{e}} 
\newcommand{\ui}{\mathrm{i}} 
\newcommand {\ph}{{\varphi}} 
\newcommand {\w}{{\omega}}
\newcommand{\St}{{\mathrm{St}}} 
\newcommand{\supp}{{\mathrm{supp}}} 
\newcommand{\abs}[1]{\lvert#1\rvert}
\newcommand{\norm}[1]{\left\|#1\right\|}
\newcommand{\vanish}[1]{\relax}
\DeclareMathOperator{\Real}{Re}
\DeclareMathOperator{\Imag}{Im}
\newtheorem{theorem}{Theorem}[section]
\newtheorem{lemma}[theorem]{Lemma}
\newtheorem{proposition}[theorem]{Proposition}
\newtheorem{corollary}[theorem]{Corollary}
\theoremstyle{definition} 
\newtheorem{remark}[theorem]{Remark}
\numberwithin{equation}{section}
\title[Functional calculus on interpolation spaces]{Functional calculus on real interpolation spaces for generators of $C_{0}$-groups}
\author{Markus Haase}
\address{Delft Institute of Applied
Mathematics\\Mekelweg 4\\2628CD Delft\\The Netherlands}
\email{m.h.a.haase@tudelft.nl}
\author{Jan Rozendaal}
\address{Delft Institute of Applied Mathematics\\Mekelweg 4\\2628CD Delft\\The Netherlands}
\email[corresponding author]{janrozendaalmath@gmail.com}
\thanks{The second-named author is supported by NWO-grant
613.000.908 ``Applications of Transference Principles".}
\subjclass[2010]{47A60, 47D03, 42B35, 42A45, 46B70}
\date{\today}
\dedicatory{}
\keywords{Functional calculus, Transference, Operator semigroup, Fourier multiplier, Interpolation space}
\begin{document}

\begin{abstract}
We study functional calculus properties of $C_{0}$-groups on real interpolation spaces, using transference principles. We obtain interpolation versions of the classical transference principle for bounded groups and of a recent transference principle for unbounded groups. Then we show that each group generator on a Banach space has a bounded $\HT^{\infty}_{1}$-calculus on real interpolation spaces. Additional results are derived from this.
\end{abstract}

\maketitle

\section{Introduction}\label{introduction}

The classical transference principle by Berkson, Gillespie and Muhly from \cite{Berkson-Gillespie-Muhly} yields an estimate 
\begin{align}\label{classical transference principle}
\norm{\int_{\R}U(s)x\,\mu(\ud s)}_{X}\leq M^{2}\norm{L_{\mu}}_{\La(\Ell^{p}(X))}\norm{x}_{X}
\end{align}
for all $x\in X$, where $(U(s))_{s\in\R}\subseteq \La(X)$ is a bounded $C_{0}$-group of operators on a Banach space $X$ with uniform bound $M$, $\mu$ is a complex Borel measure on $\R$ and $L_{\mu}$ is convolution with $\mu$ on $\Ell^{p}(X)$, the space of $p$-integrable $X$-valued functions, for $p\in[1,\infty]$. Under certain geometrical assumptions on $X$, the norm of $L_{\mu}$ can be bounded in terms of a suitable norm of the Fourier transform $\F\mu$ of $\mu$. For instance, if $X$ is a Hilbert space then $\norm{L_{\mu}}_{\La(\Ell^{2}(X))}$ is equal to $\norm{\F\mu}_{\infty}$, by Plancherel's theorem. If $X$ is a UMD space and $p\in(1,\infty)$ then bounds for $\norm{L_{\mu}}_{\La(\Ell^{p}(X))}$ follow from the Mikhlin multiplier theorem. By combining this with \eqref{classical transference principle}, functional calculus bounds for the generator $A$ of $(U(s))_{s\in\R}$ can be obtained, i.e., estimates of the form $\norm{f(A)}\leq C\norm{f}_{F}$ for all $f$ in some function algebra $F$. Such bounds are important for evolution equations, conform for instance \cite{Arendt04, Kalton-Weis1}.

Useful as this procedure is, the assumptions on the space $X$ restrict the generality of the results. In particular, Hilbert and UMD spaces are reflexive. Therefore the approach described above generally does not yield interesting results for groups of operators on non-reflexive spaces, such as $\Ce(K)$-spaces or $\Ell^{1}$-spaces. In this paper we take a different approach and consider transference principles on interpolation spaces. It is known that the functional calculus properties of operators improve upon restriction to interpolation spaces, conform for instance the result of Dore \cite{Dore1999} that an invertible sectorial operator has a bounded sectorial $\HT^{\infty}$-calculus on real interpolation spaces. However, we are interested in functional calculus on strips, the more natural choice for group generators. We use that on Besov spaces Fourier multiplier results hold that do not depend on the geometry of the underlying space \cite{Girardi-Weis, Hytonen}. Since Besov spaces are obtained from real interpolation between $\Ell^{p}$ and Sobolev spaces, this fits into the setting of a transference principle on interpolation spaces. In Proposition \ref{transference bounded groups} we derive the following version of \eqref{classical transference principle} on the real interpolation space $(X,\D(A))_{\theta,q}$ from \eqref{real interpolation}. For the $X$-valued Besov space $\B^{\theta}_{p,q}(X)$ see Section \ref{fourier multipliers on besov spaces}.


\begin{proposition}\label{transference bounded groups introduction}
Let $X$ be a Banach space and let $\theta\in(0,1)$, $p\in[1,\infty)$ and $q\in[1,\infty]$. Then there exists a constant $C\geq 0$ such that the following holds. If $-\ui A$ generate a $C_{0}$-group $(U(s))_{s\in\R}$ on a Banach space $X$ with $M:=\sup_{s\in\R}\norm{U(s)}<\infty$, then
\begin{align*}
\norm{\int_{\R}U(s)x\,\mu(\ud s)}_{(X,\D(A))_{\theta,q}}\leq  CM^{2}\norm{L_{\mu}}_{\La(\B^{\theta}_{p,q}(X))}\norm{x}_{(X,\D(A))_{\theta,q}}
\end{align*}
for all complex Borel meaures $\mu$ on $\R$ and $x\in(X,\D(A))_{\theta,q}$.
\end{proposition}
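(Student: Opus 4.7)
The plan is to lift the classical Berkson--Gillespie--Muhly transference identity to the level of vector-valued Besov spaces via a factorization of $T_{\mu}x:=\int_{\R}U(s)x\,\mu(\ud s)$, and then invoke real interpolation. I will build transference operators $\Psi\colon X\to\Ell^{p}(\R;X)$ and $\Phi\colon\Ell^{p}(\R;X)\to X$, each of norm at most a constant times $M$, whose composition with $L_{\mu}$ recovers $T_{\mu}$ up to a nonzero scalar; once I verify that they restrict boundedly to $\Psi\colon\D(A)\to\W^{1,p}(\R;X)$ and $\Phi\colon\W^{1,p}(\R;X)\to\D(A)$, real interpolation together with the standard identification $(\Ell^{p}(\R;X),\W^{1,p}(\R;X))_{\theta,q}\simeq\B^{\theta}_{p,q}(X)$ from Section~\ref{fourier multipliers on besov spaces} will automatically upgrade them to maps
\[
\Psi\colon(X,\D(A))_{\theta,q}\to\B^{\theta}_{p,q}(X),\qquad \Phi\colon\B^{\theta}_{p,q}(X)\to(X,\D(A))_{\theta,q},
\]
from which the asserted bound drops out.

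Concretely, I fix smooth cutoffs $\chi,\eta\in\Ce_{c}^{\infty}(\R)$ with $\int_{\R}\eta(t)\,\ud t=1$ and set $(\Psi x)(s):=\chi(s)U(s)x$ and $\Phi(f):=\int_{\R}\eta(t)U(-t)f(t)\,\ud t$. The $X\to\Ell^{p}(\R;X)$ and $\Ell^{p}(\R;X)\to X$ bounds with the factor $M$ are immediate from H\"older combined with $\sup_{s}\norm{U(s)}=M$. Because $-\ui A$ generates $(U(s))_{s\in\R}$, for $x\in\D(A)$ the identity $(\Psi x)'(s)=\chi'(s)U(s)x-\ui\chi(s)U(s)Ax$ shows that $\Psi$ maps $\D(A)$ into $\W^{1,p}(\R;X)$; and a symmetric integration by parts, transferring the derivative off $U(-t)$ onto $\eta$ and $f$, gives
\[
A\Phi(f)=\ui\int_{\R}\eta'(t)U(-t)f(t)\,\ud t+\ui\int_{\R}\eta(t)U(-t)f'(t)\,\ud t,
\]
so $\Phi$ maps $\W^{1,p}(\R;X)$ into $\D(A)$, again with norm of order $M$.

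The transference identity itself is then routine: provided $\chi\equiv 1$ on the set $\supp(\eta)-\supp(\mu)$, a direct calculation yields $(L_{\mu}\Psi x)(t)=U(t)T_{\check\mu}x$ for every $t\in\supp(\eta)$, where $\check\mu(E):=\mu(-E)$, and hence $\Phi(L_{\mu}\Psi x)=T_{\check\mu}x$. Since reflection $f\mapsto f(-\cdot)$ is an isometry of $\B^{\theta}_{p,q}(X)$, the norms of $L_{\mu}$ and $L_{\check\mu}$ on this space coincide, and the combination of this factorization with real interpolation of the four boundedness statements above produces the claimed estimate, with a constant of the form $C(\chi,\eta,\theta,p,q)M^{2}$.

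The principal technical obstacle is the support condition on $\chi$, which restricts the direct argument to measures $\mu$ of bounded support. For a general complex Borel measure $\mu$ I would truncate to $\mu_{R}:=\mu|_{[-R,R]}$, apply the bounded-support case with $\chi$ chosen large enough, and let $R\to\infty$. The left-hand side converges to $T_\mu x$ by dominated convergence for Bochner integrals in $(X,\D(A))_{\theta,q}$; on the right, the delicate point is to verify that $\norm{L_{\mu_{R}}}_{\La(\B^{\theta}_{p,q}(X))}$ remains controlled by $\norm{L_{\mu}}_{\La(\B^{\theta}_{p,q}(X))}$, which will require a separate approximation argument in the convolution algebra on the Besov space.
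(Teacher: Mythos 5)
Your factorization of $T_{\mu}$ through $\Ell^{p}(X)$ and $\W^{1,p}(X)$ followed by real interpolation and the identification $(\Ell^{p}(X),\W^{1,p}(X))_{\theta,q}\simeq\B^{\theta}_{p,q}(X)$ is exactly the paper's strategy, and your verification that $\Psi$ and $\Phi$ restrict boundedly to the Sobolev level is correct. The genuine gap is quantitative. After interpolating your four bounds, the constant is $C(\chi,\eta,\theta,p,q)\sim\norm{\chi}_{1,p}\,\norm{\eta}_{1,p'}$ (up to the Besov equivalence constant). Since $\chi$ must equal $1$ on $\supp(\eta)-\supp(\mu)$, for $\mu$ supported in $[-R,R]$ and a \emph{fixed} normalized $\eta$ you are forced into $\norm{\chi}_{p}\gtrsim R^{1/p}$, so the constant blows up as $R\to\infty$ (recall $p<\infty$ is the assumed range). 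This defeats both the uniformity of $C$ over compactly supported measures, which the statement demands, and your limiting argument for general $\mu$. The cure, which is the heart of the classical transference principle and of the paper's proof, is to dilate \emph{both} cutoffs simultaneously: take $\eta$ to be an $\Ell^{1}$-normalized mollification of $\frac{1}{2(\al+\beta)}\mathbf{1}_{[-(\al+\beta),\al+\beta]}$, so that $\norm{\eta}_{p'}\sim(\al+\beta)^{-1/p}$ decays and compensates the growth $\norm{\chi}_{p}\sim(R+\al+\beta)^{1/p}$, while the derivative terms $\norm{\chi'}_{p},\norm{\eta'}_{p'}$ pick up an extra factor $\al^{-1}$ from the mollifier. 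The resulting product $\bigl(1+c/\al\bigr)^{2}\bigl((R+3\al+\beta)/(\al+\beta)\bigr)^{1/p}$ tends to $1$ as $\al,\beta\to\infty$, giving the clean constant $M^{2}$ uniformly in $R$. Without this optimization your argument does not yield the stated estimate.

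Two smaller points. The truncation step you flag as delicate is in fact easy: by Young's inequality $\norm{L_{\nu}}_{\La(\Ell^{p}(X))}$ and $\norm{L_{\nu}}_{\La(\W^{1,p}(X))}$ are both at most $\norm{\nu}_{\eM(\R)}$, hence so is $\norm{L_{\nu}}_{\La(\B^{\theta}_{p,q}(X))}$ up to a fixed constant; applied to $\nu=\mu-\mu_{R}$ this gives $\limsup_{R}\norm{L_{\mu_{R}}}_{\La(\B^{\theta}_{p,q}(X))}\leq\norm{L_{\mu}}_{\La(\B^{\theta}_{p,q}(X))}$, which is all you need. Finally, your identity produces $T_{\check\mu}$ rather than $T_{\mu}$; this is harmless given the reflection invariance of the Besov norm, but it is cleaner to build the reflection into the embedding, i.e.\ to take $\Psi x(s)=\chi(-s)U(-s)x$, as the paper does.
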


Combining Proposition \ref{transference bounded groups introduction} with the aforementioned Fourier multiplier results on Besov spaces yields the following, a consequence of Corollary \ref{functional calculus real line}.

\begin{corollary}\label{functional calculus real line introduction}
Let $-\ui A$ generate a uniformly bounded $C_{0}$-group $(U(s))_{s\in\R}$ on a Banach space $X$, and let $\theta\in(0,1)$, $q\in[1,\infty]$. Then there exists a constant $C\geq 0$ such that 
\begin{align*}
\norm{\int_{\R}U(s)x\,\mu(\ud s)}_{(X,\D(A))_{\theta,q}}\leq C\sup_{s\in\R}\,\abs{\F\mu(s)}+(1+\abs{s})\,\abs{(\F\mu)'(s)}\norm{x}_{(X,\D(A))_{\theta,q}}
\end{align*}
for all $x\in(X,\D(A))_{\theta,q}$ and for each $\mu\in\eM(\R)$ such that $\F\mu\in\Ce^{1}\!(\R)$ with $\sup_{s\in\R}(1+\abs{s})\,\abs{(\F\mu)'(s)}<\infty$.
\end{corollary}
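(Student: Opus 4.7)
The plan is to reduce the statement to a Fourier multiplier bound on the $X$-valued Besov space $\B^{\theta}_{p,q}(X)$, and then apply a geometry-free Mikhlin-type theorem in that setting.

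First, fix any $p\in[1,\infty)$ (for instance $p=1$) and apply Proposition~\ref{transference bounded groups introduction} to estimate
\[
\norm{\int_{\R}U(s)x\,\mu(\ud s)}_{(X,\D(A))_{\theta,q}}\leq CM^{2}\norm{L_{\mu}}_{\La(\B^{\theta}_{p,q}(X))}\norm{x}_{(X,\D(A))_{\theta,q}}.
\]
Since $-\ui A$ generates a uniformly bounded group, $M=\sup_{s\in\R}\norm{U(s)}$ is finite, so it only remains to control $\norm{L_{\mu}}_{\La(\B^{\theta}_{p,q}(X))}$ by $\sup_{s\in\R}(|\F\mu(s)|+(1+|s|)|(\F\mu)'(s)|)$.

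Second, observe that convolution with $\mu$ on $\Ell^{p}(X)$ is precisely the Fourier multiplier operator with symbol $\F\mu$. The key input is the vector-valued Mikhlin-type Fourier multiplier theorem on Besov spaces of Girardi–Weis and Hyt\"onen: for any $m\in\Ce^{1}(\R)$ with $\norm{m}_{\infty}+\sup_{s\in\R}(1+|s|)|m'(s)|<\infty$, the associated Fourier multiplier is bounded on $\B^{\theta}_{p,q}(X)$, with operator norm controlled by precisely this quantity, and crucially \emph{without} any geometric assumption on $X$ (this is the reason Besov spaces are the right setting to avoid UMD or Hilbert hypotheses). Applying this with $m=\F\mu$, whose hypotheses are exactly those imposed on $\mu$ in the statement, yields the required bound on $\norm{L_{\mu}}_{\La(\B^{\theta}_{p,q}(X))}$.

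Combining the two estimates produces the corollary, with the constant $C$ absorbing both $M^{2}$ and the Besov-multiplier constant. There is no real technical obstacle beyond correctly invoking the Besov multiplier theorem in the form stated above and tracking that the Fourier-transform convention used to identify $L_{\mu}$ with the Fourier multiplier of symbol $\F\mu$ matches that of Proposition~\ref{transference bounded groups introduction}; the work is essentially packaged in the transference principle and in the cited multiplier theorem.
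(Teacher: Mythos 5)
Your proposal is correct and follows exactly the paper's own route: Proposition \ref{transference bounded groups} reduces the claim to bounding $\norm{L_{\mu}}_{\La(\B^{\theta}_{p,q}(X))}$, and the geometry-free Mikhlin-type multiplier theorem on Besov spaces (Corollary \ref{mikhlin condition}, derived from Girardi--Weis) applied to $m=\F\mu$ supplies that bound. No gaps.
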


We also obtain an interpolation version of the transference principle for unbounded groups from \cite{Haase5}, as Proposition \ref{transference unbounded groups}. In terms of functional calculus for the part of $A$ in $(X,\D(A))_{\theta,q}$, these transference principles yield a result for functions in the \emph{analytic Mikhlin algebra}
\begin{align}\label{analytic mikhlin algebra}
\HT^{\infty}_{1}\!(\St_{\w}):=\left\{f\in \HT^{\infty}\!(\St_{\w})\left| \sup_{z\in\St_{\w}}(1+\abs{z})\,\abs{f'(z)}<\infty\right.\right\},
\end{align}
endowed with the norm 
\begin{align}\label{mikhlin norm}
\norm{f}_{\HT^{\infty}_{1}\!(\St_{\w})}:=\sup_{z\in\St_{\w}}\abs{f(z)}+(1+\abs{z})\,\abs{f'(z)}\qquad(f\in\HT^{\infty}_{1}\!(\St_{\w})).
\end{align}
Here $\St_{\w}:=\left\{z\in\C\mid \abs{\Imag(z)}<\w\right\}$ for $\w>0$. Note that definition \eqref{mikhlin norm} of the norm in the analytic Mikhlin algebra is different from that in \cite{Haase5}, where the quantity $\norm{f}=\sup_{z\in\St_{\w}}\abs{f(z)}+\abs{zf'(z)}$ is considered. However, the two norms are equivalent on domains containing zero, and \eqref{mikhlin norm} is more natural in the setting of transference principles on (inhomogeneous) Besov spaces, since Fourier multiplier results on such spaces require an inhomogeneous condition at zero. See also Remarks \ref{comparison with UMD case} and \ref{original result unbounded groups}.

Our main functional calculus result is as follows. For the group type $\theta(U)$ see \eqref{group type}, and for a proof of this result see Theorem \ref{main functional calculus result}.


\begin{theorem}\label{main functional calculus result introduction}
Let $-\ui A$ be the generator of a $C_{0}$-group $(U(s))_{s\in\R}$ on a Banach space $X$, and let $\theta\in(0,1)$, $q\in[1,\infty]$. Then the part of $A$ in $(X,\D(A))_{\theta,q}$ has a bounded $\HT^{\infty}_{1}\!(\St_{\w})$-calculus for all $\w>\theta(U)$. If $(U(s))_{s\in\R}$ is uniformly bounded then the constant bounding the $\HT^{\infty}_{1}\!(\St_{\w})$-calculus does not depend on $\w>0$.
\end{theorem}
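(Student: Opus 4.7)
The plan is to derive the $\HT^{\infty}_{1}$-calculus estimate from the transference principles established earlier (Proposition \ref{transference bounded groups introduction} and the unbounded counterpart Proposition \ref{transference unbounded groups}), by representing $f(A)$ as an integral against the $C_0$-group $U$ and then exploiting the Mikhlin-type structure of the norm \eqref{mikhlin norm}. Concretely, for suitable $f\in\HT^{\infty}_{1}(\St_{\w})$ one wants
\[
f(A)x = \int_{\R} U(s)x\,\mu_{f}(\ud s)
\]
for a (possibly distributional) measure $\mu_{f}$ on $\R$ with $\F\mu_{f}=f|_{\R}$, after which the transference estimate reduces the problem to a Fourier multiplier bound on the vector-valued Besov space $\B^{\theta}_{p,q}(X)$.

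First I would handle the uniformly bounded case, where $\theta(U)=0$. Here Corollary \ref{functional calculus real line introduction} applies directly: for $f\in\HT^{\infty}_{1}(\St_{\w})$ whose boundary values define a measure $\mu_{f}$, one obtains
\[
\norm{f(A)x}_{(X,\D(A))_{\theta,q}} \leq C\sup_{s\in\R}\bigl(\abs{f(s)}+(1+\abs{s})\,\abs{f'(s)}\bigr)\norm{x}_{(X,\D(A))_{\theta,q}}.
\]
Since the quantity on the right is dominated by $\norm{f}_{\HT^{\infty}_{1}(\St_{\w})}$ independently of $\w$, and $C$ comes from the $\w$-free transference principle together with the Besov-space multiplier bound, the constant will indeed not depend on $\w>0$. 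The bound must then be extended from a suitable primary calculus (say the algebra of $f$ with $\F f\in\Ce_{c}^{\infty}(\R)$) to all of $\HT^{\infty}_{1}(\St_{\w})$ by an approximation $f_{n}(z):=f(z)\,\psi(z/n)$ for a Schwartz cutoff $\psi$ with $\psi(0)=1$, using a standard convergence lemma for the strip-type functional calculus of the group generator $A$.

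For the general case $\w>\theta(U)$, the plan is identical but with Proposition \ref{transference unbounded groups} in place of Proposition \ref{transference bounded groups introduction}. Since now $\norm{U(s)}$ may grow like $\ue^{(\theta(U)+\varepsilon)\abs{s}}$, the integrand $U(s)x\,\mu_{f}(\ud s)$ is controlled only because $f$ extends holomorphically to the strip $\St_{\w}$: a contour shift in the Fourier inversion formula makes $\mu_{f}$ absolutely integrable against every weight $\ue^{-\w'\abs{s}}$ with $\w'<\w$. The unbounded transference principle then yields a Besov-space convolution bound with weight $\w-\theta(U)$, and the Fourier multiplier result on $\B^{\theta}_{p,q}(X)$ applied to $\F\mu_{f}=f|_{\R}$ closes the estimate through the norm \eqref{mikhlin norm} (whose inhomogeneous $(1+\abs{z})$ factor is precisely what the Besov multiplier theorem requires at $z=0$).

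The main obstacle will be two-fold. First, one must justify the integral representation of $f(A)$ for general $f\in\HT^{\infty}_{1}(\St_{\w})$ inside a rigorously defined ambient calculus for the group generator $A$ on $(X,\D(A))_{\theta,q}$, and show that the approximation $f_{n}\to f$ produces convergence of $f_{n}(A)x$ in the interpolation norm for $x$ in a suitably dense subspace (for instance in $\D(A^{2})\cap\ran(A^{2})$). Second, in the bounded case extra care is needed to track the constants through the approximation so that $\w$-independence genuinely survives the limit; here it is essential that the bound provided by Corollary \ref{functional calculus real line introduction} depends only on values of $f$ and $f'$ on the real line, so that enlarging the strip of holomorphy cannot worsen the estimate.
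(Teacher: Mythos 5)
Your overall strategy --- transference to a convolution operator on $\B^{\theta}_{p,q}(X)$, the Mikhlin-type multiplier bound on Besov spaces, then extension from a decaying primary class by a convergence argument --- is exactly the route the paper takes, and your handling of the $\w$-independence in the bounded case is correct. But there are two concrete gaps. First, in the unbounded case you propose to apply the Besov multiplier theorem to $\F\mu_{f}=f|_{\R}$. That is not the symbol that actually appears: Proposition \ref{transference unbounded groups} bounds $U_{\mu}$ by $\norm{L_{\mu_{\w}}}_{\La(\B^{\theta}_{p,q}(X))}$ with $\mu_{\w}(\ud s)=\cosh(\w s)\,\mu(\ud s)$, whose Fourier transform is $\tfrac{1}{2}\bigl(f(\cdot+\ui\alpha)+f(\cdot-\ui\alpha)\bigr)$ for the chosen $\alpha\in(\theta(U),\w)$. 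The multiplier estimate must be applied to these shifted boundary values, and this is precisely where holomorphy of $f$ on the whole strip enters; an estimate in terms of $f|_{\R}$ alone cannot control an exponentially growing group. The missing step is the identity $\F\mu_{\alpha}(s)=\tfrac{1}{2}\bigl(f(s+\ui\alpha)+f(s-\ui\alpha)\bigr)$ together with the observation that the Mikhlin norm of this function is dominated by $\norm{f}_{\HT^{\infty}_{1}\!(\St_{\w})}$.

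Second, your extension step via ``a standard convergence lemma \dots\ for $x$ in a suitably dense subspace'' breaks down when $q=\infty$: the Convergence Lemma requires the operator to be densely defined, and $\D(A_{\theta,\infty})$ is \emph{not} dense in $\D_{A}(\theta,\infty)$ in general (the group fails to be strongly continuous there; Lemma \ref{functional calculus on interpolation spaces}(b) yields density only for $q<\infty$). The paper circumvents this by first proving the estimate for $q=1$, then writing $\D_{A}(\theta,\infty)=\bigl(\D_{A}(\theta_{1},1),\D_{A}(\theta_{2},1)\bigr)_{\theta_{3},\infty}$ via the Reiteration Theorem and transporting the operator bound with \eqref{interpolation inequality}. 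Apart from these two points your regularization $f_{n}=f\,\psi(\cdot/n)$ is workable --- the products land in $\E(\St_{\w})$, where Lemma \ref{decay implies fourier transform} supplies the representing measure, exactly as the paper's choice $\tau_{k}(z)=-k^{2}(\ui k-z)^{-2}$ does --- so the remaining architecture of your argument matches the paper's proof.
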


In \cite{Haase5}, Theorem \ref{main functional calculus result introduction} is obtained for group generators on UMD spaces and functional calculus for the operator $A$ itself. Our result shows that on interpolation spaces no assumptions on the geometry of the underlying space are required. This means that even on spaces which are not UMD, such as $\Ce(K)$-spaces and $\Ell^{1}$-spaces, one can obtain functional calculus results if one is willing to restrict to interpolation spaces. Moreover, our results reaffirm the philosophy that the functional calculus properties of an operator improve when restricted to interpolation spaces, as was already evidenced for functions on sectors by the result of Dore \cite[Theorem 3.2]{Dore1999}. 

From Theorem \ref{main functional calculus result introduction} we deduce other functional calculus statements, for sectorial operators and generators of cosine functions.

Section \ref{functional calculus and fourier multipliers} provides the necessary background on functional calculus and the theory of Fourier multipliers on Besov spaces. In Section \ref{transference principles} we establish transference principles on interpolation spaces, and in Section \ref{functional calculus results} we prove Theorem \ref{main functional calculus result introduction}. Section \ref{additional results} contains additional results that can be derived from this.

\subsection{Notation and terminology}\label{notation and terminology}

The natural numbers are $\N:=\left\{1,2,\ldots\right\}$, and we write $\N_{0}:=\N\cup\left\{0\right\}$. The letters $X$ and $Y$ denote Banach spaces over the complex number field, and $\La(X)$ is the Banach algebra of all bounded operators on $X$. The \emph{domain} $\D(A)\subseteq X$ of a closed unbounded operator $A$ on $X$ is a Banach space when endowed with the norm
\begin{align*}
\norm{x}_{\D(A)}:=\norm{x}+\norm{Ax}\qquad(x\in \D(A)).
\end{align*}
The \emph{spectrum} of $A$ is $\sigma(A)$ and the \emph{resolvent set} $\rho(A):=\C\setminus \sigma(A)$. For $z\in\rho(A)$ the operator $R(z,A):=(z \I-A)^{-1}\in\La(X)$ is the \emph{resolvent} of $A$ at $z$.

For $p\in[1,\infty]$, $\Ell^{p}(\R;X)$ is the Bochner space of equivalence classes of $X$-valued Lebesgue $p$-integrable functions on $\R$. The H\"{o}lder conjugate of $p\in[1,\infty]$ is $p'$ and is defined by $\frac{1}{p}+\frac{1}{p'}=1$. The norm on $\Ell^{p}(\R;X)$ is usually denoted by $\norm{\cdot}_{p}$. In the case $X=\C$ we will simply write $\Ell^{p}(\R)=\Ell^{p}(\R;\C)$.

By $\eM(\R)$ we denote the space of complex-valued Borel measures on $\R$ with the total variation norm. For $\w\geq 0$ we let $\eM_{\w}(\R)$ consist of those $\mu\in\eM(\R)$ of the form $\mu(\ud s)=\ue^{-\w \abs{s}}\nu(\ud s)$ for some $\nu\in\eM(\R)$, with
\begin{align*}
\norm{\mu}_{\eM_{\w}\!(\R)}:=\norm{\ue^{\w\abs{\cdot}}\mu}_{\eM(\R)}.
\end{align*}
Note that $\eM_{\w}(\R)$ is a Banach algebra under convolution. A function $g$ such that $[s\mapsto g(s)\, \ue^{\w\abs{s}}]\in \Ell^{1}(\R)$ is usually identified with its associated measure $\mu\in \eM_{\w}(\R)$ given by $\mu(\ud s)=g(s)\ud s$.

For $\Omega\neq \emptyset$ open in $\C$ we let $\HT^{\infty}\!(\Omega)$ be the unital Banach algebra of bounded holomorphic functions on $\Omega$ with the supremum norm 
\begin{align*}
\norm{f}_{\HT^{\infty}\!(\Omega)}:= \sup_{z\in\Omega}\,\abs{f(z)}\qquad(f\in\HT^{\infty}\!(\Omega)).
\end{align*}
We mainly consider the case where $\Omega$ is a strip of the form
\begin{align*}
\St_{\w}:=\left\{z\in \C\mid \abs{\Imag(z)}<\w\right\}
\end{align*}
for $\w>0$, with $\St_{0}:=\R$.

The \emph{Schwartz class} $\Sw(\R;X)$ is the space of $X$-valued rapidly decreasing smooth functions on $\R$, and the space of $X$-valued \emph{tempered distributions} is $\Sw'(\R;X)$. The \emph{Fourier transform} of an $X$-valued tempered distribution $\Phi\in \Sw'(\R;X)$ is denoted by $\F\Phi$. For instance, if $\mu\in\eM_{\w}(\R)$ for $\w>0$ then $\F\mu\in\HT^{\infty}\!(\St_{\w})\cap \Ce(\overline{\St_{\w}})$ is given by
\begin{align*}
\F\mu(z):=\int_{\R}\ue^{-\ui sz}\mu(\ud s)\qquad(z\in\St_{\w}).
\end{align*}

If $X$ and $Y$ are Banach spaces that are embedded continuously into a Hausdorff topological vector space $Z$, then we call $(X,Y)$ an \emph{interpolation couple}. We let 
\begin{align*}
K(t,z):=\inf\left\{\norm{x}_{X}+t\norm{y}_{Y}\mid x\in X, y\in Y, x+y=z\right\}
\end{align*}
for $t>0$ and $z\in X+Y\subseteq Z$. The \emph{real interpolation space} of $X$ and $Y$ with parameters $\theta\in [0,1]$ and $q\in [1,\infty]$ is
\begin{align}\label{real interpolation}
(X,Y)_{\theta,q}:=\left\{z\in X+Y\mid[t\mapsto t^{-\theta}K(t,z)]\in \Ell^{q}((0,\infty),\ud t/t)\right\},
\end{align}
a Banach space when equipped with the norm 
\begin{align*}
\norm{z}_{(X,Y)_{\theta,q}}:=
\norm{t\mapsto t^{-\theta}K(t,z)}_{\Ell^{q}((0,\infty),\ud t/t)}\qquad(z\in(X,Y)_{\theta,q}).
\end{align*}
If $T:X+Y\rightarrow X+Y$ restricts to a bounded operator on $X$ and a bounded operator on $Y$ then 
\begin{align}\label{interpolation inequality}
\norm{T}_{\La((X,Y)_{\theta,q})}\leq \norm{T}_{\La(X)}^{1-\theta}
\norm{T}_{\La(Y)}^{\theta}
\end{align}
for all $\theta\in (0,1)$ and $q\in[1,\infty]$ \cite[Theorem 3.1.2]{Bergh-Lofstrom}. We mainly consider interpolation spaces for the couple 
$\left(X,\D(A)\right)$, where $A$ is a closed operator on $X$. We write 
\begin{align*}
\D_{A}(\theta,q):=(X,\D(A))_{\theta,q}
\end{align*}
and
\begin{align*}
\norm{x}_{\theta,q}:=\norm{x}_{\D_{A}(\theta,q)}\qquad\quad (x\in\D_{A}(\theta,q)).
\end{align*}
For an operator $B$ on $X$ and a continuously embedded space $Y\hookrightarrow X$, the operator $B_{Y}$ on $Y$ that satisfies $B_{Y}y=By$ for elements in its domain
\begin{align*}
\D(B_{Y}):=\left\{y\in \D(B)\cap Y\mid By\in Y\right\}
\end{align*}
is the \emph{part} of $B$ in $Y$. If $Y=\D_{A}(\theta,q)$ for $\theta\in(0,1)$ and $q\in[1,\infty]$ then we write 
\begin{align*}
B_{\theta,q}:=B_{\D_{A}(\theta,q)}.
\end{align*}

Throughout, an $X$-valued function space $\Phi(\R;X)$ on the real line will be denoted by $\Phi(X)$ whenever little confusion can arise.

\section{Functional calculus and Fourier multipliers}\label{functional calculus and fourier multipliers}

\subsection{Functional calculus}\label{functional calculus}

We assume that the reader is familiar with the basics of the theory of $C_{0}$-groups as developed in, for instance, \cite{Engel-Nagel}, and merely recall some of the notions and results in functional calculus theory that are used. Details on functional calculus for group generators can be found in \cite[Chapter 4]{Haase1}.

Let $-\ui A$ be the generator of a $C_{0}$-group $(U(s))_{s\in\R}$ on a Banach space $X$. Then the \emph{group type} of $U$,
\begin{align}\label{group type}
\theta(U):=\inf\left\{\w\geq 0\left| \exists \text{$M\geq 1$
such that $\norm{U(s)}\leq M\ue^{\w\abs{s}}$ for all $s\geq 0$}\right.\right\},
\end{align}
is finite. Moreover, $A$ is a \emph{strip-type operator} of \emph{height} $\w_{0}:=\theta(U)$, i.e., $\sigma(A)\subseteq \overline{\St_{\w_{0}}}$ and
\begin{align*}
\sup_{\lambda\in\C\setminus \St_{\w}}\norm{R(\lambda,A)}<\infty\qquad\text{for all $\w>\w_{0}$}.
\end{align*}
The \emph{strip-type functional calculus} for $A$ is defined as follows. First, operators $f(A)\in \La(X)$ are associated with functions 
\begin{align*}
f\in \E(\St_{\w}):=\left\{g\in \HT^{\infty}\!(\St_{\w})\left| \text{$g(z)\in
O(\abs{z}^{-\alpha})$ for some $\alpha>1$ as $\abs{\Real(z)}\rightarrow \infty$}\right.\right\}
\end{align*}
for $\w>\w_{0}$, by a Cauchy-type integral
\begin{align*}
f(A):=\frac{1}{2\pi \ui}\int_{\delta \St_{\w'}}f(z)R(z,A)\,\ud z.
\end{align*}
Here $\delta \St_{\w'}$ is the positively oriented boundary of $\St_{\w'}$ for $\w'\in (\w_{0},\w)$. This procedure is independent of the choice of $\w'$ by Cauchy's theorem, and yields an algebra homomorphism $\E(\St_{\w})\rightarrow \La(X)$. The definition of $f(A)$ is extended to a larger class of functions by \emph{regularization}, i.e.
\begin{align*}
f(A):=e(A)^{-1}(ef)(A)
\end{align*}
if there exists $e\in \E(\St_{\w})$ with $e(A)$ injective and $ef\in \E(\St_{\w})$. This yields a closed unbounded operator $f(A)$ on $X$, and the definition of $f(A)$ is independent of the choice of the regularizer $e$. The algebra of all meromorphic functions on $\St_{\w}$ that are regularizable for $A$ is denoted by $\Ma_{A}(\St_{\w})$. Each $f\in \HT^{\infty}\!(\St_{\w})$ is regularizable by the function $z\mapsto(\lambda-z)^{-2}$, for $\abs{\Imag(\lambda)}>\w$.

Since $-\ui A$ generates a $C_{0}$-group, the \emph{Hille-Phillips functional calculus} for $A$ yields certain functions $f$ that give rise to bounded operators $f(A)$. Fix $M\geq 1$ and $\w\geq 0$ such that $\norm{U(s)}\leq M\ue^{\w\abs{s}}$ for all $s\in\R$. For $\mu\in\eM_{\w}(\R)$ define
\begin{align}\label{phillips calculus definition}
U_{\mu}x:=\int_{\R}U(s)x\,\mu(\ud s)\qquad(x\in X).
\end{align}
Then $\mu\mapsto U_{\mu}$ is an algebra homomorphism $\eM_{\w}(\R)\rightarrow \La(X)$. The following lemma, Lemma 2.2 in \cite{Haase4}, shows that the Hille-Phillips calculus extends the strip-type calculus for $A$.

\begin{lemma}\label{decay implies fourier transform}
Let $X$, $A$ and $U$ be as above, and let $\w'>\w\geq 0$.
\begin{itemize}
\item[a)] For each $f\in \E(\St_{\w'})$ there exists $\mu\in \eM_{\w}(\R)$ such that $f=\F\mu$.
\item[b)] Let $\mu\in\eM_{\w}(\R)$ be such that $\F\mu$ extends to an element of $\Ma_{A}(\St_{\w'})$. Then $f(A)=U_{\mu}\in\La(X)$ and 
					\begin{align*}
					\sup_{t\in\R}\norm{f(t+A)}\leq M\norm{\mu}_{\eM_{\w}(\R)}.
					\end{align*}
\end{itemize}
\end{lemma}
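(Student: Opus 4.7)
For part (a), the plan is to recover $\mu$ by inverse Fourier transform and use Cauchy's theorem to get exponential decay. Fix $\w''\in(\w,\w')$, and define $g(s):=\frac{1}{2\pi}\int_{\R}\ue^{\ui st}f(t)\,\ud t$, which converges absolutely by the $O(|t|^{-\alpha})$ decay of $f$. Because $f$ is holomorphic and integrable on each horizontal line in $\St_{\w'}$, and vanishes at infinity uniformly in horizontal strips, Cauchy's theorem lets us shift the contour from $\R$ to $\R+\ui\w''\,\St{\pm 1}$ (lower half for $s>0$, upper half for $s<0$), yielding $|g(s)|\leq C\ue^{-\w''|s|}$. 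Hence $\ue^{\w|\cdot|}g\in\Ell^{1}(\R)$, so $\mu(\ud s):=g(s)\,\ud s$ defines an element of $\eM_{\w}(\R)$, and standard Fourier inversion gives $\F\mu=f$ on $\R$, extending to $\St_{\w'}$ by analyticity.

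For part (b), the norm estimate is immediate from the definition \eqref{phillips calculus definition}:
\begin{align*}
\norm{U_{\mu}}\leq \int_{\R}\norm{U(s)}\,\abs{\mu}(\ud s)\leq M\int_{\R}\ue^{\w\abs{s}}\abs{\mu}(\ud s)=M\norm{\mu}_{\eM_{\w}(\R)},
\end{align*}
and the shifted bound follows by observing $f(t+\,\cdot\,)=\F(\ue^{-\ui t\cdot}\mu)$ with $\norm{\ue^{-\ui t\cdot}\mu}_{\eM_{\w}(\R)}=\norm{\mu}_{\eM_{\w}(\R)}$, then applying the same estimate.

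The identification $f(A)=U_{\mu}$ will be proved first under the stronger assumption $f\in \E(\St_{\w'})$. I would substitute $\F\mu(z)=\int_{\R}\ue^{-\ui sz}\mu(\ud s)$ into the Cauchy integral defining $f(A)$ along $\delta\St_{\w''}$, apply Fubini (justified by the decay and by $\mu\in\eM_{\w}(\R)$ with $\w<\w''$), and then exploit the resolvent representations $R(z,A)=-\ui\int_{0}^{\infty}\ue^{\ui zs}U(s)\,\ud s$ for $\Imag(z)>\w_{0}$ and the analogous formula on the lower half-plane to evaluate the $z$-integrals, finally recovering $\int_{\R}U(s)\,\mu(\ud s)=U_{\mu}$. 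The main technical obstacle is this interchange-of-integration step and the convergence of the resulting one-dimensional integrals, which requires the exponential weight of $\mu$ to strictly dominate the chosen contour height.

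To remove the assumption $f\in\E(\St_{\w'})$, suppose only that $\F\mu\in\Ma_{A}(\St_{\w'})$, and pick a regularizer $e\in\E(\St_{\w'})$ with $e(A)$ injective and $e\cdot\F\mu\in\E(\St_{\w'})$. By part (a), $e=\F\nu$ for some $\nu\in\eM_{\w}(\R)$, and then $e\cdot\F\mu=\F(\nu*\mu)$ with $\nu*\mu\in\eM_{\w}(\R)$. The already-proved case of (b) yields $e(A)=U_{\nu}$ and $(e\cdot\F\mu)(A)=U_{\nu*\mu}=U_{\nu}U_{\mu}$, because $\mu\mapsto U_{\mu}$ is an algebra homomorphism on $\eM_{\w}(\R)$. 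Since $e(A)=U_{\nu}$ is injective, the regularization definition gives $f(A)=e(A)^{-1}(e\cdot\F\mu)(A)=U_{\mu}\in\La(X)$, concluding the proof.
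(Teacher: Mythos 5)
The paper itself gives no proof of this lemma; it simply cites \cite{Haase4}, Lemma 2.2, so your attempt can only be judged on its own merits. Most of it is sound: the Paley--Wiener argument for a) is the right idea (though note a sign slip: with $g(s)=\frac{1}{2\pi}\int_\R \ue^{\ui st}f(t)\,\ud t$ you must shift the contour \emph{up} for $s>0$ and down for $s<0$ to produce the factor $\ue^{-\w''|s|}$), the norm estimate and the shift argument $f(t+\cdot)=\F(\ue^{-\ui t\cdot}\mu)$ in b) are correct, and the reduction of the general case to $f\in\E(\St_{\w'})$ by regularization, using a) and the homomorphism property of $\mu\mapsto U_\mu$, is exactly the right mechanism and is carried out correctly.

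The genuine gap is in the core identification $f(A)=U_\mu$ for $f\in\E(\St_{\w'})$. You propose to insert $\F\mu(z)=\int_\R\ue^{-\ui sz}\mu(\ud s)$ into the Cauchy integral and apply Fubini, then evaluate the resulting $z$-integrals via the resolvent representation. This fails as stated: on the contour $\delta\St_{\w''}$ the integrand of the double integral is dominated by $\ue^{\w''|s|}\norm{R(z,A)}$, and while the $s$-integral can be made finite (part a) actually gives $\mu\in\eM_{\tilde\w}(\R)$ for every $\tilde\w<\w'$, so one can arrange $\w''<\tilde\w$), the resolvent is merely \emph{bounded}, not integrable, along the infinite contour, so the double integral is not absolutely convergent and Fubini does not apply; correspondingly the inner integral $\int_{\delta\St_{\w''}}\ue^{-\ui sz}R(z,A)\,\ud z$ you would need to evaluate does not converge absolutely. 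The standard repair reverses the order of substitution: choose $\w''\in(\max(\w,\theta(U)),\w')$, replace $R(z,A)$ on the upper, respectively lower, boundary line by its Laplace representation $-\ui\int_0^\infty\ue^{\ui zs}U(s)\,\ud s$, respectively $\ui\int_{-\infty}^0\ue^{\ui zs}U(s)\,\ud s$; now the decay $f\in O(\abs{z}^{-\alpha})$, $\alpha>1$, together with $\norm{U(s)}\leq M\ue^{\w|s|}$ and $\w''>\w$ makes the double integral absolutely convergent, and after Fubini the $z$-integral $\frac{1}{2\pi}\int_{\Imag z=\pm\w''}f(z)\ue^{\ui zs}\,\ud z$ is shifted back to the real line by Cauchy's theorem and recognized as $g(s)$ from part a), yielding $f(A)=\int_\R g(s)U(s)\,\ud s=U_\mu$. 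You named the obstacle but did not resolve it, and the resolution requires reorganizing the computation rather than merely choosing the contour height carefully.
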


We now consider functional calculus for operators on interpolation spaces. The following lemma shows that, in particular, the functional calculi for $A$ and $A_{\theta,q}$ are compatible.

\begin{lemma}\label{functional calculus on interpolation spaces}
Let $A$ be a strip-type operator of height $\w_{0}$ on a Banach space $X$ and let $\theta\in(0,1)$, $q\in[1,\infty]$ and $m,n\in\N_{0}$. Let $Y:=(\D(A^{m}),\D(A^{n}))_{\theta,q}$.
\begin{itemize}
\item[a)] The part $A_{Y}$ of $A$ in $Y$ is a strip-type operator of height $\w_{0}$. Moreover, $f\in \Ma_{A_{Y}}(\St_{\w})$ with $f(A_{Y})=f(A)_{Y}$ for all $\w>\w_{0}$ and $f\in\Ma_{A}(\St_{\w})$.
\item[b)] If $-\ui A$ generates a $C_{0}$-group $(U(s))_{s\in\R}$ on $X$ and $q<\infty$, then $-\ui A_{Y}$ generates the $C_{0}$-group $(U(s)_{Y})_{s\in\R}$. In particular, $\D(A_{Y})$ is dense in $Y$.
\end{itemize}
\end{lemma}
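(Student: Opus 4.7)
My plan is to work through the two parts in order, leveraging the interpolation inequality \eqref{interpolation inequality} as the main tool.

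For part (a), first I would establish that the resolvent of $A$ restricts to the resolvent of $A_Y$. Since $A$ commutes with its resolvent, each $R(\lambda,A)$ for $\lambda\in\rho(A)$ maps $\D(A^{k})$ into itself and restricts to a bounded operator there with the same norm as on $X$ (in fact $\norm{R(\lambda,A)}_{\La(\D(A^{k}))}\leq \norm{R(\lambda,A)}_{\La(X)}$ via the graph norm). By \eqref{interpolation inequality}, $R(\lambda,A)$ restricts to a bounded operator on $Y$, and a direct computation on the dense subspace $\D(A^{\max(m,n)+1})\cap Y$ shows that this restriction is precisely $R(\lambda,A_{Y})$. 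This forces $\sigma(A_{Y})\subseteq\sigma(A)\subseteq\overline{\St_{\w_{0}}}$ and the strip-type resolvent bound is inherited, so $A_{Y}$ has height at most $\w_{0}$.

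Next I would verify calculus compatibility. For $f\in\E(\St_{\w})$, the Cauchy-type definition
\begin{align*}
f(A)=\frac{1}{2\pi\ui}\int_{\delta\St_{\w'}}f(z)R(z,A)\,\ud z
\end{align*}
converges absolutely in $\La(X)$; by the interpolation bound on the resolvents, the same integral converges in $\La(Y)$, showing $f(A)$ restricts to a bounded operator on $Y$ which, by inspection of the resolvent integrand, coincides with $f(A_{Y})$ computed inside the calculus for $A_{Y}$. For general $f\in\Ma_{A}(\St_{\w})$ regularized by $e\in\E(\St_{\w})$ with $e(A)$ injective, injectivity of $e(A_{Y})=e(A)_{Y}$ is automatic (if $e(A)_{Y}y=0$ then $e(A)y=0$ in $X$, so $y=0$). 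Hence $e$ regularizes $f$ for $A_{Y}$, and from $f(A_{Y})=e(A_{Y})^{-1}(ef)(A_{Y})$ together with the $\E$-class compatibility already established, $f(A_{Y})=f(A)_{Y}$ follows.

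For part (b), group members $U(s)$ commute with $A$, hence preserve each $\D(A^{k})$, and $\norm{U(s)}_{\La(\D(A^{k}))}\leq\norm{U(s)}_{\La(X)}\leq M\ue^{\w_{0}\abs{s}}$ in graph norm. By \eqref{interpolation inequality}, $U(s)_{Y}\in\La(Y)$ with the same exponential bound, and the group law passes from $X$ to $Y$ trivially. The nontrivial step, and the main obstacle, is strong continuity in $Y$. Here I would use that for $q<\infty$, the subspace $\D(A^{\max(m,n)+1})$ is dense in $Y$ (this is the standard density fact for real interpolation when the second parameter is finite, combined with the inclusion of higher-order domains). For $y$ in this dense subspace, $U(s)y-y=-\ui\int_{0}^{s}U(r)Ay\,\ud r$, and the integral is estimated in the graph norm of $A^{k}$ for $k\in\{m,n\}$, hence in both endpoint norms, so $\norm{U(s)y-y}_{Y}\to 0$ by interpolation. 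Density together with the local boundedness of $U(\cdot)_{Y}$ then yields strong continuity on all of $Y$.

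Finally, I would identify the generator: writing $B$ for the generator of $(U(s)_{Y})_{s\in\R}$, the Laplace transform formula for the resolvent shows $R(\lambda,B)$ coincides with the restriction of $R(\lambda,-\ui A)$ to $Y$, which by part (a) equals $R(\lambda,-\ui A_{Y})$; hence $B=-\ui A_{Y}$. Density of $\D(A_{Y})$ in $Y$ is then automatic from strong continuity.
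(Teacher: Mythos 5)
Your argument follows essentially the same route as the paper's: interpolate the resolvent bound via \eqref{interpolation inequality} to get strip-type of height $\w_{0}$ for $A_{Y}$, identify $R(\lambda,A_{Y})$ with $R(\lambda,A)_{Y}$, pass the Cauchy-type integral to $Y$, extend by regularization using injectivity of $e(A_{Y})=e(A)_{Y}$, and for (b) interpolate the group bound and prove strong continuity on a dense subspace. One point needs repair: in part (a) you justify $R(\lambda,A)_{Y}=R(\lambda,A_{Y})$ by ``a direct computation on the dense subspace $\D(A^{\max(m,n)+1})\cap Y$'', but in part (a) the operator $A$ is only of strip type and need not be densely defined, and for $q=\infty$ the intersection of the endpoint domains is not dense in $Y$ in any case. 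Fortunately no density is needed: for $y\in Y$ one has $AR(\lambda,A)y=\lambda R(\lambda,A)y-y\in Y$, so $R(\lambda,A)y\in\D(A_{Y})$ and $(\lambda-A_{Y})R(\lambda,A)y=y$, while $R(\lambda,A)(\lambda-A_{Y})y=y$ for $y\in\D(A_{Y})$; this direct verification is exactly the content of the reference the paper cites at this step. The remaining differences are cosmetic: your explicit integral-formula argument for strong continuity on $\D(A^{\max(m,n)+1})$ and your Laplace-transform identification of the generator replace the paper's citations of Lunardi and Engel--Nagel, and both are valid standard arguments.
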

\begin{proof}

a) First note that, for all $k\in\N_{0}$ and $\lambda\in \rho(A)$, $R(\lambda,A)$ leaves $\D(A^{k})$ invariant with $\norm{R(\lambda,A)}_{\mathcal{L}(\D(A^{k}))}\leq \norm{R(\lambda,A)}_{\mathcal{L}(X)}$. By \eqref{interpolation inequality}, $R(\lambda,A)$ leaves $Y$ invariant with 
\begin{align}\label{resolvent estimate}
\norm{R(\lambda,A)}_{\mathcal{L}(Y)}\leq \norm{R(\lambda,A)}_{\mathcal{L}(X)}.
\end{align}
By \cite[Proposition A.2.8]{Haase1}, $\sigma(A_{Y})\subseteq \sigma(A)$ and $R(\lambda,A_{Y})=R(\lambda,A)_{Y}$ for all $\lambda\in \rho(A)$. Hence \eqref{resolvent estimate} yields the first statement. Let $\w>\w_{0}$ and $f\in \E(\St_{\w})$ be given. Then
\begin{align*}
f(A_{Y})y=\frac{1}{2\pi \ui}\int_{\Gamma}f(z)R(z,A_{Y})y\,\ud z=\frac{1}{2\pi \ui}\int_{\Gamma}f(z)R(z,A)y\,\ud z=f(A)y
\end{align*}
for some contour $\Gamma$ and all $y\in Y$. For a general $f\in\mathcal{M}_{A}(\St_{\w})$, note that $e$ is a regulariser for $f$ in the functional calculus for $A_{Y}$ if it is a regulariser for $f$ in the functional calculus for $A$, since then $e(A_{Y})=e(A)_{Y}$ is injective. The rest follows by regularization.

b) By \eqref{interpolation inequality}, $\norm{U(s)_{Y}}\leq \norm{U(s)}$ for all $s\in\R$. Hence $(U(s)_{Y})_{s\in\R}$ is locally bounded. Since it is strongly continuous on the dense subset $\D(A^{\max(n,m)})\subseteq Y$ \cite[Proposition 1.2.5]{Lunardi}, it is strongly continuous on $Y$. By \cite[p.~60]{Engel-Nagel}, $-\ui A_{Y}$ is its generator. 
\end{proof}

\begin{remark}\label{types of convergence}
Part b) of Lemma \ref{functional calculus on interpolation spaces} ensures that the integral in \eqref{phillips calculus definition} is well-defined and converges in $\D_{A}(\theta,q)$, for $x\in\D_{A}(\theta,q)$ and $q<\infty$. Even though $(U(s))_{s\in\R}$ is not strongly continuous on $\D_{A}(\theta,\infty)$ in general, the integral is well-defined and converges in $X$. Since $\D_{A}(\theta,q)$ is continuously embedded in $X$ for all $\theta\in(0,1)$ and $q\in[1,\infty]$, the value of the integral does not depend on the space in which convergence takes place. Hence from now on we regularly will not specify in which norm \eqref{phillips calculus definition} converges.
\end{remark}

Let $A$ be a strip-type operator of height $\w_{0}$ and $\w>\w_{0}$. For a Banach algebra $F$ of functions that is continuously embedded in $\HT^{\infty}\!(\St_{\w})$, we say that $A$ has a \emph{bounded $F$-calculus} if there exists a constant $C\geq 0$ such that $f(A)\in \La(X)$ with
\begin{align*}
\norm{f(A)}_{\La(X)}\leq C\norm{f}_{F}\qquad\text{for all $f\in F$}.
\end{align*}

The next lemma from \cite[Proposition 5.1.7]{Haase1} is fundamental. 

\begin{lemma}[Convergence Lemma]\label{convergence lemma}
Let $A$ be a densely defined strip-type operator of height $\w_{0}$ on a Banach space $X$. Let $\w>\w_{0}$ and $(f_{j})_{j\in J}\subseteq \HT^{\infty}\!(\St_{\w})$ be a net satisfying the following conditions:
\begin{enumerate}
\item $\sup_{j\in J}\norm{f_{j}}_{\HT^{\infty}\!(\St_{\w})}<\infty$;
\item $f(z):=\lim_{j}f_{j}(z)$ exists for all $z\in \St_{\w}$;
\item $\sup_{j\in J}\norm{f_{j}(A)}_{\La(X)}<\infty$.
\end{enumerate}
Then $f\in\HT^{\infty}\!(\St_{\w})$, $f(A)\in\La(X)$, $f_{j}(A)\rightarrow f(A)$ strongly and
\begin{align*}
\norm{f(A)}\leq \limsup_{j\in J}\norm{f_{j}(A)}.
\end{align*}
\end{lemma}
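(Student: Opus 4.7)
The plan is to use the regulariser $e(z) := (\lambda - z)^{-2}$ for some $\lambda\in\C$ with $\abs{\Imag(\lambda)}>\w$ to reduce the question to a dominated-convergence argument on the Cauchy integrals representing $(ef_{j})(A)$, then to extract a bounded limiting operator on the dense subspace $\D(A^{2}) = \ran(e(A))$, and finally to identify it with $f(A)$ via closedness.

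First I would verify that $f\in\HT^{\infty}\!(\St_{\w})$: assumption (1) provides a dominated integrand in the Cauchy integral formula for each $f_{j}$ on any closed disc in $\St_{\w}$, so passing to the limit in $j$ using (2) expresses $f$ by the same Cauchy formula and shows $f$ is holomorphic, with $\norm{f}_{\HT^{\infty}\!(\St_{\w})}\leq\sup_{j}\norm{f_{j}}_{\HT^{\infty}\!(\St_{\w})}$. In particular $f\in\Ma_{A}(\St_{\w})$, regularised by $e$, and the functions $ef_{j},ef\in\E(\St_{\w})$ admit the representation
\begin{align*}
(ef_{j})(A)x = \frac{1}{2\pi\ui}\int_{\delta\St_{\w'}} f_{j}(z)e(z)R(z,A)x\,\ud z \qquad (x\in X),
\end{align*}
for any $\w'\in(\w_{0},\w)$, with the analogous representation for $(ef)(A)$. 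Since $z\mapsto e(z)R(z,A)x$ is integrable along $\delta\St_{\w'}$ and $(f_{j})$ is uniformly bounded, a dominated convergence argument yields $(ef_{j})(A)x\to(ef)(A)x$ in $X$ for every $x\in X$.

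Next I would produce the limiting operator. Multiplicativity of the calculus on the bounded operators $f_{j}(A)$ and $e(A)$ gives $f_{j}(A)e(A) = (ef_{j})(A)$, so writing a typical $y\in\D(A^{2})=\ran(e(A))$ as $y=e(A)x$ one obtains $f_{j}(A)y=(ef_{j})(A)x\to(ef)(A)x=:Ty$. Assumption (3) together with $\norm{f_{j}(A)y}\leq\norm{f_{j}(A)}\norm{y}$ then gives $\norm{Ty}\leq\bigl(\limsup_{j}\norm{f_{j}(A)}\bigr)\norm{y}$. Since $A$ is densely defined, $\D(A^{2})$ is dense in $X$, so $T$ extends uniquely to a bounded operator on $X$ with $\norm{T}\leq\limsup_{j}\norm{f_{j}(A)}$; strong convergence $f_{j}(A)\to T$ on all of $X$ then follows from a standard $3\epsilon$-argument using the density of $\D(A^{2})$ and the uniform bound from (3).

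Finally I would identify $T = f(A)$. The commutation $(ef)(A)e(A)=e(A)(ef)(A)$ between these two bounded operators from the calculus shows that $(ef)(A)y\in\ran(e(A))$ for $y=e(A)x$ with $e(A)^{-1}(ef)(A)y=(ef)(A)x=Ty$; by the definition of $f(A)$ via regularisation this means $\D(A^{2})\subseteq\D(f(A))$ and $f(A)|_{\D(A^{2})}=T|_{\D(A^{2})}$. Since $f(A)$ is closed, $T$ is bounded, and $\D(A^{2})$ is dense in $X$, approximating $x\in X$ by elements of $\D(A^{2})$ and applying the closed-graph property of $f(A)$ yields $f(A)=T\in\La(X)$. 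The main subtlety is this final identification: one must be careful to recognise that the operator produced by a particular choice of regulariser genuinely agrees with the calculus definition of $f(A)$, and the commutation of bounded operators from the calculus is what makes the bookkeeping go through.
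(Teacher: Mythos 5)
The paper does not prove this lemma itself: it is quoted from Proposition 5.1.7 of \cite{Haase1}, and your argument is essentially the standard proof given there --- regularise by $e(z)=(\lambda-z)^{-2}$, pass to the limit in the Cauchy integrals representing $(ef_{j})(A)$, obtain the limit operator on the dense subspace $\ran(e(A))=\D(A^{2})$, and identify it with $f(A)$ via closedness. The algebraic bookkeeping in your last two paragraphs is correct and complete: $f_{j}(A)e(A)=(ef_{j})(A)$ by multiplicativity and commutativity of the primary calculus, $\D(A^{2})=\ran(R(\lambda,A)^{2})$ is dense because $A$ is densely defined with nonempty resolvent set, and the closed-graph identification $T=f(A)$ is exactly the right way to finish.

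The one step you should tighten is the two appeals to ``dominated convergence'': since $(f_{j})_{j\in J}$ is a \emph{net}, not a sequence, dominated convergence is simply not available (take $J$ the finite subsets of $[0,1]$ ordered by inclusion and $f_{F}=1-\mathbf{1}_{F}$: the pointwise limit is $0$ but every integral equals $1$). What rescues the argument is holomorphy: by condition (1) and the Cauchy estimates the net is equicontinuous on compact subsets of $\St_{\w}$, so pointwise convergence in (2) upgrades to locally uniform convergence on $\St_{\w}$ (Vitali's theorem, which does hold for nets via this equicontinuity argument), and in particular $f$ is holomorphic with $\norm{f}_{\HT^{\infty}\!(\St_{\w})}\leq\sup_{j}\norm{f_{j}}_{\HT^{\infty}\!(\St_{\w})}$. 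For the convergence $(ef_{j})(A)x\to(ef)(A)x$ you should then split the contour $\delta\St_{\w'}$ into a compact piece, on which locally uniform convergence applies, and two tails, whose contribution is small uniformly in $j$ because $\abs{e(z)}\,\norm{R(z,A)x}$ is integrable along the contour and the $f_{j}$ are uniformly bounded. With this replacement the proof is complete and matches the cited one.
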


\subsection{Fourier multipliers on Besov spaces}\label{fourier multipliers on besov spaces}

Let us summarize some results about Fourier multipliers on vector-valued Besov spaces which will be used later on. Details can be found in \cite{Amann} and \cite{Girardi-Weis}.

Let $\psi\in \Ce^{\infty}\!(\R)$ be a nonnegative function with support in $[\frac{1}{2},2]$ such that
\begin{align*}
\sum_{k=-\infty}^{\infty}\psi(2^{-k}s)=1\qquad\text{for all $s\in(0,\infty)$}.
\end{align*}
For $k\in\N$ and $s\in\R$ let $\ph_{k}(s):=\psi(2^{-k}\abs{s})$, and let $\ph_{0}(s):=1-\sum_{k=1}^{\infty}\ph_{k}(s)$. Let $X$ be a Banach space and let $p,q\in[1,\infty]$ and $r\in\R$ be given. The \emph{(inhomogeneous) Besov space} $\B^{r}_{p,q}(\R;X)$ consists of all $X$-valued tempered distributions $f\in\Sw'(\R;X)$ such that
\begin{align*}
\norm{f}_{\B^{r}_{p,q}(\R;X)}:=\norm{\Big(2^{kr}\big\|\F^{-1}\ph_{k}\ast f\big\|_{\Ell^{p}(\R;X)}\Big)_{k=0}^{\infty}}_{\ell^{q}}<\infty,
\end{align*}
endowed with the norm $\norm{f}_{\B^{r}_{p,q}(\R;X)}$. Then $\B^{r}_{p,q}(\R;X)$ is a Banach space such that $\Sw(\R;X)\subseteq \B^{r}_{p,q}(\R;X)$, and a different choice of $\psi$ leads to an equivalent norm on $\B^{r}_{p,q}(\R;X)$.

For $n\in\N$ and $p\in[1,\infty]$ the Sobolev space
\begin{align*}
\W^{n,p}(\R;X):=\left\{f\in \Ell^{p}(\R;X)\mid \text{$f^{(k)}\in \Ell^{p}(\R;X)$ for all $1\leq k\leq n$}\right\},
\end{align*}
is a Banach space when endowed with the norm
\begin{align*}
\norm{f}_{n,p}:=\norm{f}_{\W^{n,p}(X)}:=\|f\|_{p}+\|f^{(n)}\|_{p}\qquad(f\in \W^{n,p}(\R;X)).
\end{align*}
In the case $X=\C$ we simply write $\W^{n,p}(\R)=\W^{1,p}(\R;\C)$.

The following lemma is equation (5.9) in \cite{Amann}. The fact that the constant $C$ does not depend on the particular Banach space follows from a direct sum argument.

\begin{lemma}\label{besov spaces interpolation}
Let $\theta\in(0,1)$, $p\in[1,\infty)$, $q\in[1,\infty]$ and $n\in\N$. Then there exists a constant $C>0$ such that, for any Banach space $X$, $(\Ell^{p}(X),\W^{n,p}(X))_{\theta,q}=\B^{n\theta}_{p,q}(X)$ with
\begin{align*}
\frac{1}{C}\norm{f}_{\B^{n\theta}_{p,q}(X)}\leq \norm{f}_{(\Ell^{p}(X),\W^{n,p}(X))_{\theta,q}}\leq C\norm{f}_{\B^{n\theta}_{p,q}(X)}\qquad(f\in\B^{n\theta}_{p,q}(X)).
\end{align*}
\end{lemma}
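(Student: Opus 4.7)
The plan is to invoke \cite[equation (5.9)]{Amann} for the identification $(\Ell^{p}(X),\W^{n,p}(X))_{\theta,q}=\B^{n\theta}_{p,q}(X)$ with an a priori $X$-dependent constant $C_X$, and then run a direct sum argument to upgrade $C_X$ to a constant depending only on $\theta,p,q,n$.

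First I will isolate the following monotonicity property, which needs to be checked directly from the definitions. Suppose $\iota\colon X_0\hookrightarrow X$ is an isometric linear embedding whose range is the image of a norm-one projection $P\in\La(X)$. Then pointwise composition $\iota_{\ast}$ is an isometric embedding of each of $\Ell^{p}(X_{0})$ into $\Ell^{p}(X)$, of $\W^{n,p}(X_{0})$ into $\W^{n,p}(X)$, and of $\B^{n\theta}_{p,q}(X_{0})$ into $\B^{n\theta}_{p,q}(X)$, because all three norms are built from pointwise $X_{0}$-norms and scalar convolutions, and scalar convolution commutes with $\iota_{\ast}$. The key point is that $\iota_{\ast}$ is also an isometric embedding at the interpolation level: the inequality $K(t,\iota_{\ast}f;X)\leq K(t,f;X_{0})$ is immediate by pushing decompositions forward, while the reverse inequality follows by applying $P_{\ast}$ to any decomposition $\iota_{\ast}f=g+h$ in $\Ell^{p}(X)+\W^{n,p}(X)$, producing the decomposition $f=P_{\ast}g+P_{\ast}h$ in $X_{0}$ without norm inflation.

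With this tool in hand, I will argue by contradiction. Suppose the constant in the equivalence cannot be chosen uniformly in $X$. Then one finds Banach spaces $X_{k}$ and nonzero elements $f_{k}\in\B^{n\theta}_{p,q}(X_{k})$ witnessing
\begin{align*}
\norm{f_{k}}_{\B^{n\theta}_{p,q}(X_{k})}\geq k\,\norm{f_{k}}_{(\Ell^{p}(X_{k}),\W^{n,p}(X_{k}))_{\theta,q}}
\end{align*}
for each $k\in\N$ (the opposite inequality direction being handled symmetrically). I form the $\ell^{\infty}$-direct sum $Y:=\bigl(\bigoplus_{k}X_{k}\bigr)_{\ell^{\infty}}$, in which each $X_{k}$ sits isometrically as the range of a norm-one coordinate projection. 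By the preceding step, transferring $f_{k}$ to $Y$ preserves both of its relevant norms exactly, so applying the Amann identification in the single space $Y$ yields $k\leq C_{Y}<\infty$ for all $k\in\N$, a contradiction.

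The step I expect to be the main obstacle is the verification that the $K$-functional is exactly preserved under a $1$-complemented isometric embedding. The easy direction is cost-free; the reverse direction is what forces the $1$-complementation hypothesis and hence the choice of the $\ell^{\infty}$ direct sum (rather than an $\ell^{p}$ variant). Once this $K$-functional compatibility is in place, the rest of the argument reduces to packaging the contradiction cleanly.
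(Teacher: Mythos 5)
Your proposal matches the paper's proof, which consists precisely of citing Amann's equation (5.9) and then asserting that the uniformity of the constant ``follows from a direct sum argument.'' Your $\ell^{\infty}$-direct-sum construction, with the $1$-complemented isometric embeddings and the exact preservation of the $K$-functional, is a correct and complete realization of exactly that argument.
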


Let $m\in \Ell^{\infty}\!(\R;\La(X))$, $p,q\in[1,\infty]$ and $r\in \R$. We say that $m$ is a \emph{bounded Fourier multiplier} on $\B^{r}_{p,q}(X)$ if there is a unique bounded operator $T_{m}:\B^{r}_{p,q}(X)\rightarrow \B^{r}_{p,q}(X)$ such that
\begin{align}\label{besov multiplier}
T_{m}(f)=\F^{-1}\left(m\cdot\F f\right)
\end{align}
for all $f\in\Sw(X)$. Each $\mu\in\eM(\R)$ induces a bounded Fourier multiplier $\F\mu$ with 
\begin{align}\label{convolution}
T_{\F\mu}(f)=L_{\mu}(f):=\mu\ast f\qquad(f\in\Sw(X)).
\end{align}
The main result about Fourier multipliers on Besov spaces that we use is the following, Corollary 4.15 from \cite{Girardi-Weis}.

\begin{proposition}\label{besov multiplier theorem}
There exists a constant $C\geq0$ such that the following holds. Let $X$ be a Banach space, $p,q\in[1,\infty]$ and $r\in\R$. If $m:\R\rightarrow \C$ is such that $\ph_{k}m\in\B^{1/2}_{2,1}(\R;\C)$ for all $k\in\N_{0}$, and
\begin{align*}
M:=\sup_{k\in\N_{0}}\inf_{a>0}\norm{(\ph_{k}m)(a\cdot)}_{\B^{1/2}_{2,1}(\R;\C)}<\infty,
\end{align*}
then $m$ is a bounded Fourier multiplier on $\B^{r}_{p,q}(X)$ with $\norm{T_{m}}_{\La(\B^{r}_{p,q}(X))}\leq CM$.
\end{proposition}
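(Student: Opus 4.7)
The plan is to localise in frequency via the Littlewood--Paley pieces $\ph_{k}$ and to transfer $\B^{1/2}_{2,1}$ information on $\ph_{k}m$ to operator-norm information on $\B^{r}_{p,q}(X)$ through the scalar Fourier algebra. The key analytic fact I will invoke is the classical scalar embedding
\begin{align*}
\B^{1/2}_{2,1}(\R;\C)\hookrightarrow \F\Ell^{1}(\R;\C),
\end{align*}
with a universal constant $C_{0}$ such that $\norm{\F^{-1}g}_{\Ell^{1}(\R;\C)}\leq C_{0}\norm{g}_{\B^{1/2}_{2,1}(\R;\C)}$. Once this is granted, convolution with the scalar $\Ell^{1}$-kernel $\F^{-1}(\ph_{k}m)$ is a bounded operator on every vector-valued $\Ell^{p}(\R;X)$ with norm at most $\norm{\F^{-1}(\ph_{k}m)}_{\Ell^{1}(\R;\C)}$, by the Young inequality, so the Banach space $X$ plays no active role.

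First I record the scaling identity $\norm{\F^{-1}(\ph_{k}m)}_{\Ell^{1}(\R;\C)}=\norm{\F^{-1}((\ph_{k}m)(a\cdot))}_{\Ell^{1}(\R;\C)}$ for every $a>0$, which combined with the embedding produces the uniform bound
\begin{align*}
\norm{\F^{-1}(\ph_{k}m)}_{\Ell^{1}(\R;\C)}\leq C_{0}\inf_{a>0}\norm{(\ph_{k}m)(a\cdot)}_{\B^{1/2}_{2,1}(\R;\C)}\leq C_{0}M
\end{align*}
for every $k\in\N_{0}$. Next, for $f\in\Sw(\R;X)$ and $k\in\N_{0}$, I choose a smooth bump $\tilde\ph_{k}$ equal to $1$ on $\supp \ph_{k}$ and supported in $\supp\ph_{k-1}\cup\supp\ph_{k}\cup\supp\ph_{k+1}$, with the convention $\ph_{-1}:=0$. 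The identity $\ph_{k}m\cdot\F f=(\ph_{k}m)(\tilde\ph_{k}\F f)$ then yields
\begin{align*}
\F^{-1}\ph_{k}\ast T_{m}f=\F^{-1}(\ph_{k}m)\ast(\F^{-1}\tilde\ph_{k}\ast f),
\end{align*}
and Young's inequality combined with the previous step gives
\begin{align*}
\big\|\F^{-1}\ph_{k}\ast T_{m}f\big\|_{\Ell^{p}(\R;X)}\leq C_{0}M\,\big\|\F^{-1}\tilde\ph_{k}\ast f\big\|_{\Ell^{p}(\R;X)}.
\end{align*}

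Multiplying by $2^{kr}$ and taking the $\ell^{q}$ norm over $k$ finishes the bound on $\Sw(\R;X)$, since the $\tilde\ph_{k}$ form an equivalent Littlewood--Paley partition and hence the resulting $\ell^{q}$ quantity is comparable to $\norm{f}_{\B^{r}_{p,q}(X)}$. A density or tempered-distribution argument then extends this to a unique $T_{m}\in\La(\B^{r}_{p,q}(X))$ satisfying \eqref{besov multiplier} with $\norm{T_{m}}\leq CM$. The principal obstacle is the scalar embedding $\B^{1/2}_{2,1}(\R;\C)\hookrightarrow \F\Ell^{1}(\R;\C)$; this is proved by decomposing $g=\sum_{j}\ph_{j}g$, applying Cauchy--Schwarz with a frequency-adapted weight on each dyadic block to obtain $\norm{\F^{-1}(\ph_{j}g)}_{\Ell^{1}(\R;\C)}\leq C 2^{j/2}\norm{\ph_{j}g}_{\Ell^{2}(\R;\C)}$, and summing in $j$ thanks to the $\ell^{1}$ aggregation built into the $\B^{1/2}_{2,1}$ norm.
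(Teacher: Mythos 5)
The paper offers no proof of this proposition at all --- it is quoted verbatim as Corollary 4.15 of Girardi--Weis --- so there is nothing internal to compare against; what you have written is essentially the standard proof of the scalar case of that result. The skeleton is sound: the reduction to the embedding $\B^{1/2}_{2,1}(\R;\C)\hookrightarrow \F\Ell^{1}(\R;\C)$, the observation that $\norm{\F^{-1}h}_{\Ell^{1}(\R;\C)}$ is invariant under the dilations $h\mapsto h(a\cdot)$ (which is exactly why the hypothesis carries the infimum over $a>0$), the insertion of a fattened cutoff $\tilde\ph_{k}$, and the blockwise application of Young's inequality, which is the reason no geometric hypothesis on $X$ is needed. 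The displayed identities in the main body all check out.

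There is, however, a genuine error in your justification of the key embedding. As written, $\ph_{j}g$ denotes the pointwise product (that is how $\ph_{k}m$ is used in the statement being proved), and the inequality $\norm{\F^{-1}(\ph_{j}g)}_{\Ell^{1}}\leq C2^{j/2}\norm{\ph_{j}g}_{\Ell^{2}}$ is then false: compact frequency support plus an $\Ell^{2}$ bound does not control $\norm{\F^{-1}(\cdot)}_{\Ell^{1}}$. For example, with $g(s)=\ue^{\ui Ns^{2}}$ one has $\norm{\ph_{j}g}_{\Ell^{2}}=\norm{\ph_{j}}_{\Ell^{2}}$ independently of $N$, while stationary phase shows $\norm{\F^{-1}(\ph_{j}g)}_{\Ell^{1}}\rightarrow\infty$ as $N\rightarrow\infty$. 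The correct block inequality is for the Littlewood--Paley pieces $\F^{-1}\ph_{j}\ast g$: since $\F^{-1}(\F^{-1}\ph_{j}\ast g)$ coincides, up to a reflection and a constant, with the compactly supported function $\ph_{j}\cdot\F g$ itself, Cauchy--Schwarz against $\abs{\supp\ph_{j}}^{1/2}\sim 2^{j/2}$ applies with no intervening Fourier transform, and summing over $j$ gives precisely $\norm{\F^{-1}g}_{\Ell^{1}}\leq C_{0}\norm{g}_{\B^{1/2}_{2,1}}$. Your main argument only invokes the embedding in this correct form, so the defect is confined to the last paragraph and is repairable. Two smaller points: (i) placing $\tilde\ph_{k}$ on the $f$-side costs a factor $2^{\abs{r}}$ when you trade $2^{kr}$ for $2^{jr}$ with $\abs{j-k}\leq1$, so your constant depends on $r$, contrary to the statement; writing $\ph_{k}m\cdot\F f=(\tilde\ph_{k}m)\cdot(\ph_{k}\F f)$ instead, i.e.\ fattening the symbol rather than the function, gives $\norm{\F^{-1}(\tilde\ph_{k}m)}_{\Ell^{1}}\leq 3C_{0}M$ and a truly universal constant. (ii) For $p=\infty$ or $q=\infty$ the Schwartz class is not dense in $\B^{r}_{p,q}(X)$, so a density argument does not produce $T_{m}$; one must define $T_{m}f:=\sum_{k}\F^{-1}(\tilde\ph_{k}m)\ast(\F^{-1}\ph_{k}\ast f)$ for general $f$ and verify convergence in $\Sw'(\R;X)$.
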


\begin{corollary}\label{mikhlin condition}
There exists a constant $C\geq0$ such that for all Banach spaces $X$, $p,q\in[1,\infty]$, $r\in\R$ and all $m\in \Ce^{1}(\R;\C)$ with
\begin{align*}
N:=\sup_{s\in\R}\,\abs{m(s)}+(1+\abs{s})\abs{m'(s)}<\infty,
\end{align*}
$m$ is a bounded Fourier multiplier on $\B^{r}_{p,q}(X)$ with $\norm{T_{m}}_{\La(\B^{r}_{p,q}(X))}\leq CN$.
\end{corollary}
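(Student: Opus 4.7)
The plan is to apply Proposition~\ref{besov multiplier theorem}: it suffices to show that
\begin{align*}
\sup_{k \in \N_{0}} \inf_{a > 0} \norm{(\ph_{k} m)(a \cdot)}_{\B^{1/2}_{2,1}(\R;\C)} \lesssim N
\end{align*}
with constant independent of $m$. For $k \geq 1$ I would choose $a = 2^{k}$, which collapses the dyadic annulus on which $\ph_{k}$ lives onto a fixed one: explicitly, $g_{k}(s) := (\ph_{k} m)(2^{k} s) = \psi(\abs{s}) m(2^{k} s)$ is supported in $\{\tfrac{1}{2} \leq \abs{s} \leq 2\}$. For $k = 0$ take $a = 1$, giving $g_{0} := \ph_{0} m$ supported in $[-2, 2]$. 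The point of this scaling is that each derivative falling on $m(2^{k}\cdot)$ pulls out a factor $2^{k}$, which is absorbed by the hypothesis $2^{k} \abs{m'(2^{k} s)} \leq 2^{k} N / (1 + 2^{k-1}) \leq 2 N$ on $\abs{s} \geq \tfrac{1}{2}$.

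Next I would reduce the $\B^{1/2}_{2,1}$-norm bound to a Sobolev bound by establishing the embedding $\W^{1,2}(\R;\C) \hookrightarrow \B^{1/2}_{2,1}(\R;\C)$. By Plancherel and $\supp \ph_{j} \subseteq \{\abs{s} \geq 2^{j-1}\}$ for $j \geq 1$, one has $\norm{\F^{-1}\ph_{j} \ast f}_{2} = \norm{\ph_{j} \F f}_{2} \lesssim 2^{-j} \norm{f'}_{2}$, and $\norm{\F^{-1}\ph_{0} \ast f}_{2} \leq \norm{\ph_{0}}_{\infty} \norm{f}_{2}$. Summing the defining Littlewood-Paley series,
\begin{align*}
\norm{f}_{\B^{1/2}_{2,1}} = \sum_{j=0}^{\infty} 2^{j/2} \norm{\F^{-1}\ph_{j} \ast f}_{2} \lesssim \norm{f}_{\W^{1,2}},
\end{align*}
since $\sum_{j \geq 1} 2^{-j/2} < \infty$. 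This is the step I expect to be the main obstacle: the half-integer smoothness gap \emph{just} suffices to reach the strong summability index $q = 1$, whereas a cruder interpolation argument would only land in $\B^{1/2}_{2,\infty}$.

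It remains to verify $\norm{g_{k}}_{\W^{1,2}} \lesssim N$ uniformly in $k$. Since $\supp g_{k} \subseteq [-2, 2]$, it is enough to bound $\norm{g_{k}}_{\infty}$ and $\norm{g_{k}'}_{\infty}$. Trivially $\norm{g_{k}}_{\infty} \leq \norm{\psi}_{\infty} N$. For $k \geq 1$,
\begin{align*}
g_{k}'(s) = \psi'(\abs{s})\, \mathrm{sgn}(s)\, m(2^{k} s) + 2^{k} \psi(\abs{s})\, m'(2^{k} s),
\end{align*}
so the first term is dominated by $\norm{\psi'}_{\infty} N$, and the second by the estimate from the first paragraph. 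The case $k = 0$ is immediate from $(1 + \abs{s}) \abs{m'(s)} \leq N$ on the bounded support of $\ph_{0}$. Combining the three steps gives $\norm{g_{k}}_{\B^{1/2}_{2,1}} \lesssim \norm{g_{k}}_{\W^{1,2}} \lesssim N$, and Proposition~\ref{besov multiplier theorem} delivers the corollary.
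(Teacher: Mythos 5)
Your proof is correct. The paper itself gives no argument beyond citing \cite[Corollary 4.11]{Girardi-Weis}, and what you have written --- rescaling the $k$-th dyadic piece by $2^{k}$, verifying the resulting uniform $\W^{1,2}$-bound from the Mikhlin condition, and passing through the embedding $\W^{1,2}(\R;\C)\hookrightarrow\B^{1/2}_{2,1}(\R;\C)$ to check the hypothesis of Proposition~\ref{besov multiplier theorem} --- is essentially a self-contained version of that cited argument.
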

\begin{proof} 
This follows as in \cite[Corollary 4.11]{Girardi-Weis}. See also \cite[Remark 4.16]{Girardi-Weis}.
\end{proof}

\section{Transference principles}\label{transference principles}

\subsection{Unbounded groups}\label{unbounded groups}

We first establish an interpolation version of the transference principle for unbounded groups from \cite{Haase5}. Note that, for each $\mu\in\eM(\R)$ and $p\in[1,\infty]$, the convolution operator $L_{\mu}$ from \eqref{convolution} extends to a bounded operator on $\Ell^{p}(X)$, by Young's inequality. For $\w\geq 0$ and $\mu\in\eM_{\w}(\R)$ let $\mu_{\w}\in \eM(\R)$ be given by $\mu_{\w}(\ud s):=\cosh(\w s)\mu(\ud s)$.

\begin{proposition}\label{transference unbounded groups}
Let $0\leq\w_{0}<\w$, $\theta\in(0,1)$, $p\in[1,\infty)$ and $q\in[1,\infty]$. Then there exists a constant $C\geq0$ such that the following holds. If $-\ui A$ generates a $C_{0}$-group $(U(s))_{s\in\R}$ on a Banach space $X$ such that $\norm{U(s)}_{\La(X)}\leq M\cosh(\w_{0}s)$ for all $s\in\R$ and some $M\geq 1$, then
\begin{align*}
\norm{\int_{\R}U(s)x\,\mu(\ud s)}_{\theta,q}\leq  CM^{2}\norm{L_{\mu_{\w}}}_{\La(\B^{\theta}_{p,q}(X))}\norm{x}_{\theta,q}
\end{align*}
for all $\mu\in\eM_{\w}(\R)$ and $x\in\D_{A}(\theta,q)$.
\end{proposition}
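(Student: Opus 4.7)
The strategy is to run the unbounded transference principle of \cite{Haase5} (which lives on $L^{p}(\R;X)$) on two levels simultaneously and then real-interpolate, exactly as is done for the bounded version in Proposition~\ref{transference bounded groups}. The key mechanism in that transference is a factorisation
\begin{align*}
U_{\mu} = P \circ L_{\mu_{\w}} \circ J,
\end{align*}
where $J\colon X \to \Ell^{p}(\R;X)$ is a lifting built from $U$ and a cutoff $\phi\in \Ce_{c}^{\infty}(\R)$ (roughly $Jx(s) = \phi(s)\cosh(\w s)^{-1} U(s)x$ or an analogue tuned to \cite{Haase5}), and $P\colon \Ell^{p}(\R;X) \to X$ is a corresponding retraction. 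The weight absorbed into $\mu_{\w}$ is precisely what compensates the growth $\norm{U(s)} \leq M\cosh(\w_{0}s)$ and yields $\norm{J}_{\La(X,\Ell^{p}(X))}, \norm{P}_{\La(\Ell^{p}(X),X)} \leq C_{1}M$ for $\w>\w_{0}$.

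The new input is that $J$ and $P$ also lift to the Sobolev level: for $x \in \D(A)$, the function $s \mapsto U(s)x$ is $\Ce^{1}$ with $\frac{\ud}{\ud s}U(s)x = \ui A\, U(s)x$, so differentiation under the cutoff gives a uniformly bounded map $J\colon \D(A) \to \W^{1,p}(\R;X)$ of norm $\leq C_{2}M$ (the only subtlety is that one bounds $\norm{(Jx)'}_{p}$ in terms of $\norm{x} + \norm{Ax} = \norm{x}_{\D(A)}$ using the same weighted cosh-bound on $\norm{U(s)}$). Analogously, using that $P$ is an integral against $U(-s)$ and that $U$ commutes with $A$ on $\D(A)$, $P$ restricts to a bounded map $\W^{1,p}(\R;X)\to\D(A)$ of norm $\leq C_{2}M$. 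Note that $L_{\mu_{\w}}$ restricts to a bounded operator on $\W^{1,p}(\R;X)$ since it commutes with differentiation.

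Next I would apply real interpolation with parameters $(\theta,q)$ to the couples $(X,\D(A))$ on the source/target side and $(\Ell^{p}(\R;X),\W^{1,p}(\R;X))$ in the middle. By the general functorial property of the real method (or by \eqref{interpolation inequality} applied to $J$ and $P$ viewed as operators on the appropriate sums), both
\begin{align*}
J\colon \D_{A}(\theta,q) \to (\Ell^{p}(\R;X),\W^{1,p}(\R;X))_{\theta,q}, \qquad P\colon (\Ell^{p}(\R;X),\W^{1,p}(\R;X))_{\theta,q} \to \D_{A}(\theta,q)
\end{align*}
are bounded with norms $\leq CM$. Invoking Lemma~\ref{besov spaces interpolation} to identify the intermediate space with $\B^{\theta}_{p,q}(X)$ (with equivalent norm, up to a universal constant) and composing the three maps, one obtains
\begin{align*}
\norm{U_{\mu}x}_{\theta,q} \leq \norm{P} \cdot \norm{L_{\mu_{\w}}}_{\La(\B^{\theta}_{p,q}(X))} \cdot \norm{Jx}_{\B^{\theta}_{p,q}(X)} \leq C M^{2} \norm{L_{\mu_{\w}}}_{\La(\B^{\theta}_{p,q}(X))} \norm{x}_{\theta,q},
\end{align*}
which is the claim.

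The main obstacle is bookkeeping in step two: one must check that the explicit transference maps from \cite{Haase5} really do send $\D(A)$ into $\W^{1,p}(\R;X)$ and vice versa with constants of the same form $CM$, and that the intertwining identity $U_{\mu} = PL_{\mu_{\w}}J$ is still valid on $\D(A)$ (so that it survives interpolation). Once the Sobolev-level boundedness is in hand, the real-interpolation step is essentially automatic, and the rescaled measure $\mu_{\w}$ appears in precisely the place dictated by \cite{Haase5}.
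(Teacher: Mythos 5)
Your proposal follows essentially the same route as the paper's proof: factorize $U_{\mu}=P\circ L_{\mu_{\w}}\circ\iota$ as in \cite{Haase5}, verify that the lifting and retraction also act boundedly between $\D(A)$ and $\W^{1,p}(\R;X)$ (with constants of the form $CM$, the weight in $\mu_{\w}$ absorbing the $\cosh(\w_{0}s)$ growth), apply the real interpolation functor to both factorizations, and identify $(\Ell^{p}(X),\W^{1,p}(X))_{\theta,q}$ with $\B^{\theta}_{p,q}(X)$ via Lemma~\ref{besov spaces interpolation}. The only cosmetic difference is that the actual maps from \cite{Haase5} use $1/\cosh(\alpha\cdot)$ decay rather than a compactly supported cutoff, as you anticipated with your hedge.
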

\begin{proof}
Let $\mu\in\eM_{\w}(\R)$ be given and let $U_{\mu}$ be as in \eqref{phillips calculus definition}. By the proof of Theorem 3.2 in\cite{Haase5}, we can factorize $U_{\mu}$ as $U_{\mu}=P\circ L_{\mu_{\w}}\circ \iota$, where
\begin{itemize}
	\item $\iota:X\rightarrow \Ell^{p}(X)$ is given by 
	\begin{align*}
	\iota x(s):=\psi(-s)U(-s)x\qquad (x\in X,s\in\R),
	\end{align*}
	with
	\begin{align*}
	\psi(s):=\frac{1}{\cosh(\alpha s)}\qquad(s\in\R)
	\end{align*}
	for $\alpha>\w$ fixed.
	\item $P:\Ell^{p}(X)\rightarrow X$ is given by 
	\begin{align*}
	Pf:=\int_{\R}\ph(s)U(s)f(s)\,\ud s\qquad(f\in \Ell^{p}(X)),
	\end{align*}
	with 
	\begin{align*}
	\ph(s):=\frac{\sqrt{8}\w}{\pi}\frac{\cosh(\w s)}{\cosh(2\w s)}\qquad(s\in\R).
	\end{align*}
\end{itemize}
Then, using H\"{o}lder's inequality,
\begin{align}
\label{norm iota on X}
\norm{\iota}_{\La(X,\Ell^{p}(X))}&\leq M\norm{\psi\cosh(\w_{0}\cdot)}_{p},\\
\label{norm P on X}
\norm{P}_{\La(\Ell^{p}(X),X)}&\leq M \norm{\ph\cosh(\w_{0}\cdot)}_{p'}.
\end{align}
We claim that $\iota:\D(A)\rightarrow \W^{1,p}(X)$ and $P:\W^{1,p}(X)\rightarrow \D(A)$ are well-defined and bounded. To prove this claim, first let $x\in\D(A)$. Then $\iota x\in\Ce^{1}\!(\R)$ with
\begin{align*}
(\iota x)'(s)&=-\psi'(-s)U(-s)x+\ui\psi(-s)U(-s)Ax\\
&=-\alpha\frac{\tanh(\alpha s)}{\cosh(\alpha s)}U(-s)x+\ui \frac{1}{\cosh(\alpha s)}U(-s)Ax
\end{align*}
for all $s\in\R$. Hence $(\iota x)'\in\Ell^{p}(\R)$ with
\begin{align*}
\norm{(\iota x)'}_{p}\leq \alpha M\norm{\tanh}_{\Ell^{\infty}(\R)}\norm{\frac{\cosh(\w_{0}\cdot)}{\cosh(\alpha\cdot)}}_{p}\!\norm{x}_{X}+M\norm{\frac{\cosh(\w_{0}\cdot)}{\cosh(\alpha\cdot)}}_{p}\!\norm{Ax}_{X}.
\end{align*}
Combining this with \eqref{norm iota on X} implies that $\iota x\in\W^{1,p}(\R)$ with
\begin{align}\label{norm iota on D(A)}
\norm{\iota x}_{1,p}\leq M(\alpha\norm{\tanh}_{\Ell^{\infty}(\R)}+1)\norm{\frac{\cosh(\w_{0}\cdot)}{\cosh(\alpha\cdot)}}_{p}\!\norm{x}_{\D(A)}.
\end{align}
This shows that $\iota:\D(A)\rightarrow \W^{1,p}(X)$ is bounded. To prove the claim for $P$, fix $f\in \Sw(X)$ and note that
\begin{align*}
\frac{1}{h}(U(h)-\I)Pf=\int_{\R}U(s)\frac{\ph(s-h)f(s-h)-\ph(s)f(s)}{h}\,\ud s
\end{align*}
for $h>0$. The latter expression converges to $-\int_{\R}U(s)(\ph f)'(s)\,\ud s\in X$ as $h\rightarrow 0$, by the dominated convergence theorem. Hence $Pf\in\D(A)$ with
\begin{align*}
APf=\lim_{h\rightarrow 0}\frac{1}{h}(U(h)-\I)Pf=-\int_{\R}U(s)(\ph'(s) f(s)+\ph(s)f'(s))\,\ud s.
\end{align*}
Another application of H\"{o}lder's inequality yields
\begin{align*}
\norm{APf}_{X}\leq M\norm{\ph'\cosh(\w_{0}\cdot)}_{p'}\norm{f}_{p}+M\norm{\ph\cosh(\w_{0}\cdot)}_{p'}\norm{f'}_{p}.
\end{align*}
Combining this with \eqref{norm P on X} implies
\begin{align}\label{norm P on D(A)}
\norm{Pf}_{\D(A)}\leq M\left(\norm{\ph \cosh(\w_{0}\cdot)}_{p'}+\norm{\ph'\cosh(\w_{0}\cdot)}_{p'}\right)\norm{f}_{1,p}.
\end{align}
As $\Sw(X)$ is dense in $\W^{1,p}(X)$, $P:\W^{1,p}(X)\rightarrow \D(A)$ is bounded.

Since $L_{\mu_{\w}}\in\La(\W^{1,p}(X))$, we can factorize $U_{\mu}\in\La(\D(A))$ as $U_{\mu}=P\circ L_{\mu_{\w}}\circ \iota$ via bounded maps through $\W^{1,p}(X)$. Applying the real interpolation method with parameters $\theta$ and $q$ to the two factorizations of $U_{\mu}$, through $\Ell^{p}(X)$ respectively $\W^{1,p}(X)$, yields the commutative diagram of bounded maps
\begin{align*}
\begin{CD}
			(\Ell^{p}(X),\W^{1,p}(X))_{\theta,q}		@>L_{\mu_{\w}}>>			(\Ell^{p}(X),\W^{1,p}(X))_{\theta,q}\\
			@AA\iota A																										@VVPV\\
			\D_{A}(\theta,q)									 			@>U_{\mu}>>						\D_{A}(\theta,q)
\end{CD}
\end{align*}
Finally, estimate the norms of $\iota$ and $P$ in this diagram by applying \eqref{interpolation inequality} to \eqref{norm iota on X} and \eqref{norm iota on D(A)} respectively \eqref{norm P on X} and \eqref{norm P on D(A)}. This yields
\begin{align}\label{besovless estimate unbounded}
\norm{U_{\mu}}_{\La(\D_{A}(\theta,q))}\leq C'M^{2}\norm{L_{\mu}}_{\La((\Ell^{p}(X),\W^{1,p}(X))_{\theta,q})}
\end{align}
for a constant $C'\geq 0$ independent of $\mu$. Now Lemma \ref{besov spaces interpolation} concludes the proof.
\end{proof}

\subsection{Bounded groups}\label{bounded groups}

In this section we establish a version of the classical transference principle from \cite{Berkson-Gillespie-Muhly} on interpolation spaces, already stated in the Introduction as Proposition \ref{transference bounded groups introduction}. In the proof we use the convention $1/\infty:=0$.

\begin{proposition}\label{transference bounded groups}
Let $\theta\in(0,1)$, $p\in[1,\infty)$ and $q\in[1,\infty]$. Then there exists a constant $C\geq 0$ such that the following holds. If $-\ui A$ generate a $C_{0}$-group $(U(s))_{s\in\R}$ on a Banach space $X$ with $M:=\sup_{s\in\R}\norm{U(s)}<\infty$, then
\begin{align}\label{transference estimate bounded groups}
\norm{\int_{\R}U(s)x\,\mu(\ud s)}_{\theta,q}\leq CM^{2}\norm{L_{\mu}}_{\La(\B^{\theta}_{p,q}(X))}\norm{x}_{\theta,q}
\end{align}
for all $\mu\in\eM(\R)$ and $x\in\D_{A}(\theta,q)$.
\end{proposition}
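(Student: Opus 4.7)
The plan is to adapt the proof of Proposition \ref{transference unbounded groups}, factoring $U_\mu$ through $\Ell^p(X)$ and $\W^{1,p}(X)$ and then invoking real interpolation together with Lemma \ref{besov spaces interpolation}. The twist compared to the unbounded case is that no \emph{exact} factorization of the form $U_\mu = P \circ L_\mu \circ \iota$ is available in the bounded setting; one follows instead the classical Berkson--Gillespie--Muhly idea of smooth compactly supported cutoffs $\phi_{1,N}, \phi_{2,N}$ depending on a parameter $N \geq 1$, yielding an approximate factorization that recovers $U_\mu$ only in the limit $N \to \infty$.

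Concretely, fix $\phi_1, \phi_2 \in \Ce^{\infty}_c(\R)$ with $\int_\R \phi_1(s) \phi_2(s) \, \ud s = 1$, and for $N \geq 1$ set $\phi_{1,N}(s) := N^{-1/p} \phi_1(s/N)$ and $\phi_{2,N}(s) := N^{-1/p'} \phi_2(s/N)$ (with $1/\infty := 0$), and then define
\begin{align*}
\iota_N x(s) := \phi_{1,N}(s) U(-s) x \qquad\text{and}\qquad P_N f := \int_\R \phi_{2,N}(s) U(s) f(s) \,\ud s.
\end{align*}
H\"older's inequality immediately yields $\norm{\iota_N}_{\La(X, \Ell^p(X))} \leq M \norm{\phi_1}_p$ and $\norm{P_N}_{\La(\Ell^p(X), X)} \leq M \norm{\phi_2}_{p'}$ uniformly in $N$. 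Mimicking the derivative computations of Proposition \ref{transference unbounded groups}, for $x \in \D(A)$ one has
\begin{align*}
(\iota_N x)'(s) = \phi_{1,N}'(s) U(-s) x + \ui \phi_{1,N}(s) U(-s) Ax,
\end{align*}
and the crucial scaling identity $\norm{\phi_{1,N}'}_p = N^{-1} \norm{\phi_1'}_p \leq \norm{\phi_1'}_p$ makes $\iota_N : \D(A) \to \W^{1,p}(X)$ bounded uniformly in $N$; an analogous integration-by-parts argument (using density of $\Sw(X)$ in $\W^{1,p}(X)$, which is where $p < \infty$ enters) delivers the matching uniform bound for $P_N : \W^{1,p}(X) \to \D(A)$. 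A Fubini computation then shows
\begin{align*}
P_N L_\mu \iota_N x = \int_\R K_N(t) U(t) x \,\mu(\ud t), \qquad K_N(t) := \int_\R \phi_2(u) \phi_1(u - t/N) \,\ud u,
\end{align*}
so that $K_N(t) \to \int_\R \phi_1 \phi_2 \,\ud u = 1$ pointwise in $t$ and $\abs{K_N} \leq \norm{\phi_1}_p \norm{\phi_2}_{p'}$ uniformly.

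Interpolating $\iota_N$ and $P_N$ with parameters $\theta, q$ via \eqref{interpolation inequality}, and identifying $(\Ell^p(X), \W^{1,p}(X))_{\theta, q}$ with $\B^{\theta}_{p,q}(X)$ by Lemma \ref{besov spaces interpolation}, one obtains uniform-in-$N$ bounds on $\iota_N : \D_A(\theta, q) \to \B^{\theta}_{p,q}(X)$ and $P_N : \B^{\theta}_{p,q}(X) \to \D_A(\theta, q)$, both by $CM$. Composition with $L_\mu$ acting on $\B^{\theta}_{p,q}(X)$ gives
\begin{align*}
\norm{P_N L_\mu \iota_N x}_{\theta, q} \leq C M^2 \norm{L_\mu}_{\La(\B^{\theta}_{p,q}(X))} \norm{x}_{\theta, q}
\end{align*}
uniformly in $N$. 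Dominated convergence in $X$ yields $P_N L_\mu \iota_N x \to U_\mu x$ in $X$, and hence also in the sum space $X + \D(A)$. The main obstacle is the passage to the limit in the interpolation norm: since $K(t, \cdot)$ is a continuous norm on $X + \D(A)$, convergence there implies $K(t, P_N L_\mu \iota_N x) \to K(t, U_\mu x)$ for every $t > 0$, and Fatou's lemma applied in $\Ell^q((0, \infty), \ud t/t)$ then gives $\norm{U_\mu x}_{\theta, q} \leq \liminf_N \norm{P_N L_\mu \iota_N x}_{\theta, q}$, which is exactly \eqref{transference estimate bounded groups}.
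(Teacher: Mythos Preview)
Your argument is correct, but it takes a genuinely different route from the paper's. The paper first reduces to compactly supported $\mu$ (with $\supp(\mu)\subseteq[-N,N]$) and then constructs smooth compactly supported $\psi,\ph$ (depending on auxiliary parameters $\alpha,\beta>0$) with $\psi\ast\ph\equiv 1$ on $[-N,N]$; this gives an \emph{exact} factorization $U_{\mu}=P\circ L_{\mu}\circ\iota$ through $\Ell^{p}(X)$ and through $\W^{1,p}(X)$. After interpolating, the paper takes the infimum over $\alpha,\beta$ (effectively letting the cutoffs spread out so that $\norm{\psi'}_{p},\norm{\ph'}_{p'}$ become negligible) and obtains the sharp constant $M^{2}$ in \eqref{besovless estimate bounded} before invoking Lemma~\ref{besov spaces interpolation}. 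Your approach instead keeps $\mu$ general, uses dilated cutoffs $\phi_{j,N}$ yielding an \emph{approximate} factorization with kernel $K_{N}\to 1$, and passes to the limit $N\to\infty$ via a Fatou argument on the $K$-functional (legitimate since $K(t,\cdot)$ is $1$-Lipschitz in the $X$-norm, and the $q=\infty$ case follows by taking the pointwise supremum). Both methods are realizations of the same ``spread the cutoffs'' idea; your route avoids the reduction to compact support and the two-parameter optimisation, at the price of the extra Fatou step and a slightly less explicit constant. One minor correction: your assertion that ``no exact factorization\dots is available in the bounded setting'' is not accurate---the paper does produce one, for compactly supported $\mu$.
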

\begin{proof}
First note that it suffices to establish \eqref{transference estimate bounded groups} for measures with compact support. Indeed, approximating by measures with compact support then extends \eqref{transference estimate bounded groups} to all $\mu\in\eM(\R)$. So fix $N>0$ and let $\mu\in\eM(\R)$ be such that $\supp(\mu)\subseteq[-N,N]$. We will factorize $U_{\mu}$ using the abstract transference principle from \cite[Section 2]{Haase6}. To this end, let $\rho\in\Ce^{\infty}\!(\R)$ be defined by
\begin{align*}
\rho(s):=\left\{
\begin{array}{ll}
c_{1}\exp\left(\frac{1}{s^{2}-1}\right)&|s|<1\\
0&|s|\geq 1
\end{array}\right.,
\end{align*}
where $c_{1}\geq 0$ is such that $\int_{\R}\rho(s)\,\ud s=1$. Fix $\alpha,\beta>0$ and define $\sigma(s):=\frac{1}{\al}\,\rho\left(\frac{s}{\al}\right)$ for $s\in\R$, and
\begin{align*}
\psi:=\sigma\ast\mathbf{1}_{[-(N+3\alpha+\beta),N+3\alpha+\beta]}\qquad\text{and}\qquad \ph:=\frac{1}{2(\al+\beta)}\sigma\ast\mathbf{1}_{[-(\alpha+\beta),\alpha+\beta]}.
\end{align*}
Then $\psi,\ph\in \Ce^{\infty}\!(\R)$ are such that $\supp(\ph)\subseteq [-(2\alpha+\beta),2\alpha+\beta]$, 
\begin{align*}
\psi\equiv 1 \text{ on }[-(2\alpha+N+\beta),2\alpha+N+\beta]\quad\text{and}\quad \int_{-(2\alpha+\beta)}^{2\alpha+\beta}\ph(s)\,\ud s=1.
\end{align*}
Hence $\psi\ast \ph\equiv 1$ on $[-N,N]$. Let $\iota:X\rightarrow \Ell^{p}(X)$ be given by
\begin{align*}
\iota x(s):=\psi(-s)U(-s)x\qquad(x\in X, s\in\R),
\end{align*}
and $P:\Ell^{p}(X)\rightarrow X$ by
\begin{align*}
Pf:=\int_{\R}\ph(s)U(s)f(s)\,\ud s\qquad(f\in \Ell^{p}(X)).
\end{align*}
Proposition 2.3 in \cite{Haase6} yields the factorization $U_{\mu}=P\circ L_{\mu}\circ \iota$, where we use that $(\psi\ast\ph)\mu=\mu$.
By H\"{o}lder's inequality,
\begin{align}\label{norms on X bounded}
\norm{\iota}_{\La(X,\Ell^{p}(X))}\leq M\norm{\psi}_{p}\quad\textrm{and}\quad\norm{P}_{\La(\Ell^{p}(X),X)}\leq M\norm{\ph}_{p'}
\end{align}
Moreover, $\iota:\D(A)\rightarrow \W^{1,p}(X)$ and $P:\W^{1,p}(X)\rightarrow \D(A)$ are bounded with 
\begin{align}\label{norms on D(A) bounded}
\norm{\iota}_{\La(\D(A),\W^{1,p}(X))}\leq M\norm{\psi}_{1,p}\quad\textrm{and}\quad\norm{P}_{\La(\W^{1,p}(X),\D(A))}\leq M\norm{\ph}_{1,p'}.
\end{align}
This follows by arguments almost identical to those in the proof of Proposition \ref{transference unbounded groups}. Applying the real interpolation method with parameters $\theta$ and $q$ to the two factorizations of $U_{\mu}$, through $\Ell^{p}(X)$ and $\W^{1,p}(X)$, produces the commutative diagram of bounded maps
\begin{align*}
\begin{CD}
			(\Ell^{p}(X),\W^{1,p}(X))_{\theta,q}		@>L_{\mu}>>						(\Ell^{p}(X),\W^{1,p}(X))_{\theta,q}\\
			@AA\iota A																										@VVPV\\
			\D_{A}(\theta,q)									 			@>U_{\mu}>>						\D_{A}(\theta,q)
\end{CD}
\end{align*}
Use \eqref{interpolation inequality} on \eqref{norms on X bounded} and \eqref{norms on D(A) bounded} to estimate the norms of $\iota$ and $P$ in this factorization as $\norm{\iota}\leq M\norm{\psi}_{1,p}$ and $\norm{P}\leq M\norm{\ph}_{1,p'}$. This yields
\begin{align}\label{norm estimate on interpolation space}
\norm{U_{\mu}}_{\La(\D_{A}(\theta,q))}\leq M^{2}\norm{\psi}_{1,p}\norm{\ph}_{1,p'}\norm{L_{\mu}}_{\La((\Ell^{p}(X),\W^{1,p}(X))_{\theta,q})}.
\end{align}
To determine $\norm{\psi}_{1,p}$ and $\norm{\ph}_{1,p'}$, note that
\begin{align*}
\norm{\psi}_{p}&\leq \norm{\sigma}_{1}\norm{\mathbf{1}_{[-(N+3\al+\beta),N+3\al+\beta]}}_{p}=(2(N+3\al+\beta))^{1/p},\\
\norm{\ph}_{p'}&\leq \frac{1}{2(\alpha+\beta)}\norm{\sigma}_{1}\norm{\mathbf{1}_{[-(\al+\beta),\al+\beta]}}_{p'}=(2(\al+\beta))^{-1/p},
\end{align*}
by Young's inequality. Since $\sigma$ is an even function that is decreasing on $[0,\alpha]$ and supported on $[-\alpha,\alpha]$, its derivative satisfies 
\begin{align*}
\norm{\sigma'}_{1}=-2\int_{0}^{\alpha}\sigma'(s)\,\ud s=2(\sigma(0)-\sigma(\al))=\frac{2\rho(0)}{\al}.
\end{align*}
Let $c_{2}:=2\rho(0)$. Another application of Young's inequality yields
\begin{align*}
\norm{\psi'}_{p}&\leq \norm{\sigma'}_{1}\norm{\mathbf{1}_{[-(N+3\al+\beta),N+3\al+\beta]}}_{p}=\frac{c_{2}}{\alpha}(2(N+3\al+\beta))^{1/p},\\
\norm{\ph'}_{p}&\leq \frac{1}{2(\alpha+\beta)}\norm{\sigma'}_{1}\norm{\mathbf{1}_{[-(\al+\beta),\al+\beta]}}_{p}=\frac{c_{2}}{\alpha}(2(\al+\beta))^{-1/p}.
\end{align*}
Hence \eqref{norm estimate on interpolation space} becomes
\begin{align*}
\norm{U_{\mu}}_{\La(\D_{A}(\theta,q))}\leq M^{2}\left(1+\frac{c_{2}}{\al}\right)^{2}\left(\frac{N+3\al+\beta}{\al+\beta}\right)^{1/p}\norm{L_{\mu}}_{\La((\Ell^{p}(X),\W^{1,p}(X))_{\theta,q})}.
\end{align*}
Taking the infimum over $\alpha$ and $\beta$ yields
\begin{align}\label{besovless estimate bounded}
\norm{U_{\mu}}_{\La(\D_{A}(\theta,q))}\leq M^{2}\norm{L_{\mu}}_{\La((\Ell^{p}(X),\W^{1,p}(X))_{\theta,q})}.
\end{align}
Lemma \ref{besov spaces interpolation} now establishes \eqref{transference estimate bounded groups} and concludes the proof.
\end{proof}

\begin{remark}\label{constants}
Note that the constant $C$ in Proposition \ref{transference bounded groups} comes only from the equivalence of the norms on $(\Ell^{p}(X),\W^{1,p}(X))_{\theta,q}$ and $\B^{\theta}_{p,q}(X)$, whereas in Proposition \ref{transference unbounded groups} a constant is present which is inherent to the transference method. 
\end{remark}

\begin{remark}\label{sharpness transference}
Let $p\in[1,\infty)$ and let $(U(s))_{s\in\R}\subseteq\La(\Ell^{p}(\C))$ be the shift group given by $U(s)f(t):=f(t+s)$ for $f\in\Ell^{p}(\C)$, $s\in\R$ and almost all $t\in\R$. Then $(U(s)))_{s\in\R}$ is generated by $-\ui A$, where $Af:=\ui f'$ for $f\in\D(A)=\W^{1,p}(\C)$. Hence $\D_{A}(\theta,q)=(\Ell^{p}(\C),\W^{1,p}(\C))_{\theta,q}$ for $\theta\in(0,1)$ and $q\in[1,\infty]$. Moreover, for $\mu\in\eM(\R)$ and $f\in\Ell^{p}(\C)$, 
\begin{align*}
\int_{\R}U(s)f\,\ud\mu(s)=\mu\ast f=L_{\mu}(f).
\end{align*}
Hence, with $U_{\mu}$ as in \eqref{phillips calculus definition}, 
\begin{align*}
\norm{U_{\mu}}_{\La(\D_{A}(\theta,q))}=\norm{L_{\mu}}_{\La((\Ell^{p}(\C),\W^{1,p}(\C))_{\theta,q})}.
\end{align*}
This shows that \eqref{besovless estimate bounded} is sharp in general, up to possibly a change of constant. By Lemma \ref{besov spaces interpolation}, the same holds for \eqref{transference estimate bounded groups}.
\end{remark}

Corollary \ref{mikhlin condition} yields the following result, Corollary \ref{functional calculus real line introduction} from the Introduction.


\begin{corollary}\label{functional calculus real line}
Let $\theta\in(0,1)$ and $q\in[1,\infty]$. Then there exists a constant $C\geq 0$ such that the following holds. Let $-\ui A$ generate a $C_{0}$-group $(U(s))_{s\in\R}$ on a Banach space $X$ with $M:=\sup_{s\in\R}\norm{U(s)}<\infty$, and let $\mu\in\eM(\R)$ be such that $\F\mu\in\Ce^{1}\!(\R)$ with $\sup_{s\in\R}(1+\abs{s})\,\abs{(\F\mu)'(s)}<\infty$. Then
\begin{align*}
\norm{\int_{\R}U(s)x\,\mu(\ud s)}_{\theta,q}\leq CM^{2}\sup_{s\in\R}\,\abs{\F\mu(s)}+(1+\abs{s})\,\abs{(\F\mu)'(s)}\,\norm{x}_{\theta,q}
\end{align*}
for all $x\in\D_{A}(\theta,q)$.
\end{corollary}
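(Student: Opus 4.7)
The plan is to chain the transference estimate of Proposition~\ref{transference bounded groups} with the Mikhlin-type Fourier multiplier bound of Corollary~\ref{mikhlin condition}; essentially nothing else is needed.

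First I would fix some auxiliary exponent $p\in[1,\infty)$ (any choice works; for concreteness take $p=1$) and apply Proposition~\ref{transference bounded groups} with the given $\theta\in(0,1)$ and $q\in[1,\infty]$. This produces a constant $C_{1}\geq 0$, depending only on $\theta$, $p$, $q$, such that for every uniformly bounded $C_{0}$-group $(U(s))_{s\in\R}$ with bound $M$ on every Banach space $X$ and every $\mu\in\eM(\R)$,
\[
\Bigl\|\int_{\R}U(s)x\,\mu(\ud s)\Bigr\|_{\theta,q}\leq C_{1}M^{2}\norm{L_{\mu}}_{\La(\B^{\theta}_{p,q}(X))}\norm{x}_{\theta,q}
\]
for all $x\in\D_{A}(\theta,q)$.

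Next I would identify the convolution operator $L_{\mu}$ with the Fourier multiplier $T_{\F\mu}$ via formula~\eqref{convolution}. The hypothesis of the corollary places $m:=\F\mu$ in $\Ce^{1}(\R;\C)$; moreover $\sup_{s\in\R}\abs{\F\mu(s)}\leq\norm{\mu}_{\eM(\R)}<\infty$ is automatic from $\mu\in\eM(\R)$, while $\sup_{s\in\R}(1+\abs{s})\abs{(\F\mu)'(s)}<\infty$ is an assumption. Hence Corollary~\ref{mikhlin condition} applies with $r=\theta$ and yields a constant $C_{2}\geq 0$, independent of $X$, $p$, $q$ and $\theta$, such that
\[
\norm{L_{\mu}}_{\La(\B^{\theta}_{p,q}(X))}=\norm{T_{\F\mu}}_{\La(\B^{\theta}_{p,q}(X))}\leq C_{2}\Bigl(\sup_{s\in\R}\abs{\F\mu(s)}+(1+\abs{s})\abs{(\F\mu)'(s)}\Bigr).
\]

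Composing these two inequalities with $C:=C_{1}C_{2}$ gives the claim. There is no real obstacle: the only step beyond quoting the two preceding results is the trivial verification that the symbol of a bounded Borel measure is automatically bounded, so the full Mikhlin quantity appearing in the corollary is finite whenever its derivative part is. In particular, the resulting $C$ depends only on $\theta$ and $q$, as asserted.
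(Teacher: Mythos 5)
Your proposal is correct and is exactly the argument the paper intends: the corollary is obtained by composing the transference estimate of Proposition \ref{transference bounded groups} (with a fixed auxiliary $p\in[1,\infty)$) with the Mikhlin-type multiplier bound of Corollary \ref{mikhlin condition} applied to $m=\F\mu$ on $\B^{\theta}_{p,q}(X)$, using the identification $L_{\mu}=T_{\F\mu}$. Your observation that $\sup_{s}\abs{\F\mu(s)}\leq\norm{\mu}_{\eM(\R)}$ guarantees finiteness of the full Mikhlin quantity is the only verification needed, and the resulting constant depends only on $\theta$ and $q$ once $p$ is fixed, as claimed.
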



\begin{remark}\label{other multiplier results}
To obtain Corollary \ref{functional calculus real line} we used Corollary \ref{mikhlin condition}, but there are other ways to verify the conditions of Proposition \ref{besov multiplier theorem}, for instance H\"{o}rmander type assumptions, cf.\cite[pp.~47-49]{Girardi-Weis}. More generally, one can define a norm on the space of all bounded Fourier multipliers $m$ on $\B^{r}_{p,q}(X)$ by $\norm{m}_{\mathcal{M}(\B^{r}_{p,q}(X))}:=\norm{T_{m}}_{\La(\B^{r}_{p,q}(X))}$, with $T_{m}$ as in \eqref{besov multiplier}. Proposition \ref{transference bounded groups} yields $\norm{U_{\mu}}_{\La(\D_{A}(\theta,q))}\leq C\norm{\F\mu}_{\mathcal{M}(\B^{r}_{p,q}(X))}$, which cannot be improved in general, cf.~Remark \ref{sharpness transference}.
\end{remark}

\begin{remark}\label{comparison with UMD case}
If $X$ is a UMD space then \eqref{classical transference principle} and the vector-valued Mikhlin multiplier theorem \cite[Theorem E.6.2 b]{Haase1} yield an estimate
\begin{align*}
\norm{\int_{\R}U(s)x\,\mu(\ud s)}_{X}\leq CM^{2}\norm{x}_{X}\sup_{s\in\R}\,\abs{\F\mu(s)}+\abs{s(\F\mu)'(s)}
\end{align*}
for all $x\in X$. Corollary \ref{functional calculus real line} then follows from \eqref{interpolation inequality}, and moreover singularities of $(\F\mu)'$ at zero are allowed. However, in our setting of general Banach spaces, the inhomogeneity of the Besov space $\B^{r}_{p,q}(X)$ implies that a condition at zero on the multiplier is needed to deal with the term $\ph_{0}m$ in Proposition \ref{besov multiplier theorem}.
\end{remark}

\begin{remark}\label{functional calculus real line remark}
Letting $f:=\F\mu$, Corollary \ref{functional calculus real line} yields an estimate
\begin{align}\label{functional calculus real line estimate}
\norm{f(A_{\theta,q})}\leq C\sup_{s\in\R}\,\abs{f(s)}+(1+\abs{s})\,\abs{f'(s)}.
\end{align}
This is a functional calculus statement for $A_{\theta,q}$ involving functions on the real line. One may now ask to which functions $f$ on the real line the definition of $f(A_{\theta,q})$ can be extended in a sensible manner such that \eqref{functional calculus real line estimate} holds. We can take the closure of the Fourier transforms of measures in the space consisting of all functions $f\in\Ce^{1}\!(\R)$ for which $\sup_{s\in\R}\,\abs{f(s)}+(1+\abs{s})\,\abs{f'(s)}$ is finite, or approximate by holomorphic functions as in \cite[Lemma 4.15]{Kriegler}, using Theorem \ref{main functional calculus result}. This will yield a definition of $f(A_{\theta,q})$ for a class of functions on the real line and a bound as in \eqref{functional calculus real line estimate}, but the question then remains how this definition relates to other known extensions of functional calculi. In the present article we restrict ourselves to results about holomorphic functional calculi.
\end{remark}

\section{Functional calculus results}\label{functional calculus results}

We now use the theory established in the previous sections to prove our main functional calculus result, Theorem \ref{main functional calculus result introduction}. Recall the definition of the analytic Mikhlin algebra $\HT^{\infty}_{1}\!(\St_{\w})$ from \eqref{analytic mikhlin algebra}.


\begin{theorem}\label{main functional calculus result}
Let $-\ui A$ be the generator of a $C_{0}$-group $\left(U(s)\right)_{s\in\R}$ on a Banach space $X$ and let $\theta\in(0,1)$, $q\in[1,\infty]$ and $\w>\theta(U)$ be given. Then there exists a constant $C\geq 0$ such that $f(A_{\theta,q})\in\La(\D_{A}(\theta,q))$ with
\begin{align*}
\norm{f(A_{\theta,q})}_{\La(\D_{A}(\theta,q))}\leq C\norm{f}_{\HT^{\infty}_{1}\!(\St_{\w})}
\end{align*}
for all $f\in\HT^{\infty}_{1}\!(\St_{\w})$. If $(U(s))_{s\in\R}$ is uniformly bounded then $C$ can be chosen independent of $\w>0$.
\end{theorem}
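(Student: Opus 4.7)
The overall strategy is to apply the interpolation transference principles (Propositions \ref{transference unbounded groups} and \ref{transference bounded groups}) together with the Besov-space Fourier multiplier bound of Corollary \ref{mikhlin condition} on a dense subclass of ``nice'' functions, and then extend to all of $\HT^{\infty}_{1}(\St_{\w})$ via the Convergence Lemma. Note that by Lemma \ref{functional calculus on interpolation spaces}(a), $f(A_{\theta,q})=f(A)_{\theta,q}$ for any $f\in\HT^{\infty}(\St_{\w})\subseteq \Ma_{A}(\St_{\w})$, so we may work with the part of $f(A)$ on $\D_{A}(\theta,q)$.

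For the unbounded case, fix auxiliary heights $\theta(U)<\w_{0}<\w_{1}<\w$. Let $f\in\E(\St_{\w})$: Lemma \ref{decay implies fourier transform}(a) produces $\mu\in\eM_{\w_{1}}(\R)$ with $f=\F\mu$, and part (b) gives $f(A)=U_{\mu}$. Apply Proposition \ref{transference unbounded groups} (with its parameters $\w_{0},\w_{1}$) to bound $\|U_{\mu}\|_{\La(\D_{A}(\theta,q))}\leq CM^{2}\|L_{\mu_{\w_{1}}}\|_{\La(\B^{\theta}_{p,q}(X))}$, and then use Corollary \ref{mikhlin condition} to bound the multiplier norm by the Mikhlin norm of $\F\mu_{\w_{1}}$. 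The key computation is
\begin{equation*}
\F\mu_{\w_{1}}(s)=\tfrac{1}{2}\bigl(f(s+\ui\w_{1})+f(s-\ui\w_{1})\bigr),\qquad (\F\mu_{\w_{1}})'(s)=\tfrac{1}{2}\bigl(f'(s+\ui\w_{1})+f'(s-\ui\w_{1})\bigr),
\end{equation*}
which, using $(1+|s|)\leq(1+|s\pm\ui\w_{1}|)$ and $s\pm\ui\w_{1}\in\St_{\w}$, yields $\|\F\mu_{\w_{1}}\|_{\mathrm{Mikhlin}}\leq 2\|f\|_{\HT^{\infty}_{1}(\St_{\w})}$. Combining everything, $\|f(A_{\theta,q})\|\leq C\|f\|_{\HT^{\infty}_{1}(\St_{\w})}$ for all $f\in\E(\St_{\w})$.

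To extend to a general $f\in\HT^{\infty}_{1}(\St_{\w})$, regularize via $e_{n}(z):=n^{2}/(n^{2}+z^{2})$ and set $f_{n}:=e_{n}f$. For $n>\w$, $e_{n}$ is holomorphic on $\St_{\w}$ and decays like $|z|^{-2}$, so $f_{n}\in\E(\St_{\w})$; moreover $f_{n}\to f$ pointwise, and a direct estimate shows $\|e_{n}\|_{\HT^{\infty}(\St_\w)}$ and $\sup_{z\in\St_\w}(1+|z|)\abs{e_n'(z)}$ are uniformly bounded in $n$, hence $\|f_{n}\|_{\HT^{\infty}_{1}(\St_{\w})}\leq C\|f\|_{\HT^{\infty}_{1}(\St_{\w})}$ uniformly in $n$. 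Apply the previous step to each $f_{n}$ and invoke Lemma \ref{convergence lemma} (the operator $A_{\theta,q}$ is strip-type of height $\theta(U)$ by Lemma \ref{functional calculus on interpolation spaces}(a)) to conclude $f(A_{\theta,q})\in\La(\D_{A}(\theta,q))$ with the desired bound. For the uniformly bounded case, replace Proposition \ref{transference unbounded groups} by Proposition \ref{transference bounded groups}, whose constant depends only on $M$ and the interpolation parameters; the Mikhlin estimate above is independent of $\w$, so the final constant $C$ is $\w$-independent.

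The main obstacle is the case $q=\infty$, where $\D(A_{\theta,\infty})$ need not be dense in $\D_{A}(\theta,\infty)$ (cf.~Remark \ref{types of convergence}), so Lemma \ref{convergence lemma} does not apply directly. I would handle this by applying the Convergence Lemma on the closure of $\D(A_{\theta,\infty})$ inside $\D_{A}(\theta,\infty)$ to obtain strong convergence there, and then extending the bound from this closure to all of $\D_{A}(\theta,\infty)$ using the uniform operator bound $\sup_{n}\|f_{n}(A_{\theta,\infty})\|<\infty$ together with the regularization formula $f(A_{\theta,\infty})=e(A_{\theta,\infty})^{-1}(ef)(A_{\theta,\infty})$ applied on the regularization core.
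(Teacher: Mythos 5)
Your argument for $q<\infty$ is essentially the paper's own proof: the same factorization through the transference principles, the same identity $\F\mu_{\alpha}(s)=\tfrac12\bigl(f(s+\ui\alpha)+f(s-\ui\alpha)\bigr)$ feeding into Corollary \ref{mikhlin condition}, and the same extension from $\HT^{\infty}_{1}\!(\St_{\w})\cap\E(\St_{\w})$ to all of $\HT^{\infty}_{1}\!(\St_{\w})$ by multiplying with a uniformly $\HT^{\infty}_{1}$-bounded net of rational regularizers and invoking the Convergence Lemma (the paper uses $\tau_{k}(z)=-k^{2}(\ui k-z)^{-2}$ instead of your $n^{2}(n^{2}+z^{2})^{-1}$, which changes nothing). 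This part is correct, including the observation that the $\w$-independence in the bounded case comes from substituting Proposition \ref{transference bounded groups} for Proposition \ref{transference unbounded groups}.

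The gap is in your treatment of $q=\infty$. Applying the Convergence Lemma to the part of $A_{\theta,\infty}$ in $Y_{0}:=\overline{\D(A_{\theta,\infty})}$ does give a bounded operator there, and via $f(A_{\theta,\infty})e(A_{\theta,\infty})x=(ef)(A_{\theta,\infty})x$ one even gets that $f(A_{\theta,\infty})$ is defined, with the right bound, on $\ran(e(A_{\theta,\infty}))=\D(A_{\theta,\infty}^{2})$ and hence on $Y_{0}$. But the theorem asserts $f(A_{\theta,\infty})\in\La(\D_{A}(\theta,\infty))$, so for $x\notin Y_{0}$ you must show $x\in\D(f(A_{\theta,\infty}))$, i.e.\ that $(ef)(A_{\theta,\infty})x$ lies in $\ran(e(A_{\theta,\infty}))$ --- not merely in its closure. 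Your uniform bound $\sup_{n}\norm{f_{n}(A_{\theta,\infty})}<\infty$ only yields $(ef)(A_{\theta,\infty})x=\lim_{n}e(A_{\theta,\infty})f_{n}(A_{\theta,\infty})x$ with $(f_{n}(A_{\theta,\infty})x)_{n}$ bounded; the Cauchy-integral argument that forces convergence of $f_{n}(A_{\theta,\infty})x$ works only on $\overline{\D(A_{\theta,\infty})}$, and without convergence (or weak compactness, unavailable since $\D_{A}(\theta,\infty)$ is typically non-reflexive) you cannot conclude membership in the range. The paper sidesteps this entirely via the Reiteration Theorem: $\D_{A}(\theta,\infty)=(\D_{A}(\theta_{1},1),\D_{A}(\theta_{2},1))_{\theta_{3},\infty}$, so the already-proved $q=1$ case together with \eqref{interpolation inequality} and the compatibility statement of Lemma \ref{functional calculus on interpolation spaces} gives boundedness on all of $\D_{A}(\theta,\infty)$. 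You should replace your $q=\infty$ step by this (or an equivalent) argument.
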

\begin{proof}
First consider $f\in \HT^{\infty}_{1}\!(\St_{\w})\cap \E(\St_{\w})$ and fix $\alpha\in(\theta(U),\w)$ and $p\in[1,\infty)$. By Lemma \ref{decay implies fourier transform} there exists $\mu\in\eM_{\alpha}\!(\R)$ such that $f=\F\mu$. By Lemmas \ref{decay implies fourier transform} and \ref{functional calculus on interpolation spaces} and Proposition \ref{transference unbounded groups},
\begin{align}\label{convolution estimate}
\norm{f(A_{\theta,q})}=\norm{(U_{\mu})_{\theta,q}}\leq C_{1}\norm{L_{\mu_{\alpha}}}_{\La(\B^{\theta}_{p,q}(X))}=C_{1}\norm{T_{\F\mu_{\alpha}}}_{\La(\B^{\theta}_{p,q}(X))}
\end{align}
for some constant $C_{1}\geq 0$, where $T_{\F\mu_{\alpha}}$ is as in \eqref{besov multiplier}. Since
\begin{align*}
\F\mu_{\alpha}(s)=\frac{f(s+i\alpha)+f(s-i\alpha)}{2}\qquad(s\in\R),
\end{align*}
Corollary \ref{mikhlin condition} yields a constant $C_{2}\geq 0$ such that
\begin{align}\label{estimate for elementary functions}
\norm{f(A_{\theta,q})}\leq C_{2}\sup_{s\in\R}\,\abs{\F\mu_{\alpha}(s)}+(1+\abs{s})\abs{(\F\mu_{\alpha})'(s)}\leq C_{2}\norm{f}_{\HT^{\infty}_{1}\!(\St_{\w})}.
\end{align}
For general $f\in\HT^{\infty}_{1}\!(\St_{\w})$ first assume that $q<\infty$. By part b) of Lemma \ref{functional calculus on interpolation spaces}, $\D(A_{\theta,q})$ is dense in $\D_{A}(\theta,q)$. Let $\tau_{k}(z):=-k^{2}(ik-z)^{-2}$ for $k\in\N$ with $k>\w$ and $z\in \St_{\w}$. Then $\tau_{k}, f\tau_{k}\in \HT^{\infty}_{1}\!(\St_{\w})\cap \E(\St_{\w})$,
\begin{align*}
\sup_{k} \norm{f\tau_{k}}_{\HT^{\infty}_{1}\!(\St_{\w})}\leq \norm{f}_{\HT^{\infty}_{1}\!(\St_{\w})}\sup_{k} \norm{\tau_{k}}_{\HT^{\infty}_{1}\!(\St_{\w})}<\infty
\end{align*}
and $f\tau_{k}(z)\rightarrow f(z)$ as $k\rightarrow \infty$, for all $z\in\St_{\w}$. Now \eqref{estimate for elementary functions} yields
\begin{align*}
\norm{f\tau_{k}(A_{\theta,q})}\leq C_{2}\norm{f\tau_{k}}_{\HT^{\infty}_{1}\!(\St_{\w})}\leq C\norm{f}_{\HT^{\infty}_{1}\!(\St_{\w})}
\end{align*}
for some $C\geq 0$. Hence the Convergence Lemma \ref{convergence lemma} implies $f(A)\in \La(X)$ and
\begin{align}\label{main functional calculus estimate}
\norm{f(A_{\theta,q})}\leq C\norm{f}_{\HT^{\infty}_{1}\!(\St_{\w})}.
\end{align}
Finally, for $q=\infty$ the Reiteration Theorem \cite[Theorem 3.5.3]{Bergh-Lofstrom} yields
\begin{align*}
\D_{A}(\theta,\infty)=\left(\D_{A}(\theta_{1},1),\D_{A}(\theta_{2},1)\right)_{\theta_{3},\infty}
\end{align*}
with equivalence of norms, where $\theta_{1},\theta_{2},\theta_{3}\in (0,1)$ are such that $\theta_{1}\neq \theta_{2}$ and $\theta_{1}(1-\theta_{3})+\theta_{2}\theta_{3}=\theta$. Combining \eqref{main functional calculus estimate} and \eqref{interpolation inequality} concludes the proof of the first statement.

In the case where $(U(s))_{s\in\R}$ is uniformly bounded, use  Proposition \ref{transference bounded groups} instead of \ref{transference unbounded groups} in \eqref{convolution estimate} to obtain 
\begin{align*}
\norm{f(A_{\theta,q})}\leq C_{1}\norm{T_{\F\mu}}_{\La(\B^{\theta}_{p,q}(X))}
\end{align*}
for all $f\in \HT^{\infty}_{1}\!(\St_{\w})\cap \E(\St_{\w})$ and some constant $C_{1}\geq 0$ independent of $\w$. The rest of the proof is the same as before.
\end{proof}


\begin{remark}\label{original result unbounded groups}
Compare Theorem \ref{main functional calculus result} with Theorem 3.6 in \cite{Haase5}. There an estimate
\begin{align}\label{original estimate}
\norm{f(A)}_{\La(X)}\leq C\!\sup_{z\in\St_{\w}}\abs{f(z)}+\abs{zf'(z)}
\end{align}
is obtained when the underlying space $X$ is a UMD space, and the constant $C$ is independent of $\w$ when the group in question is uniformly bounded. Theorem \ref{main functional calculus result} follows from \eqref{original estimate} by interpolation, and this seems to yield a stronger result since the term $\sup_{z\in\St_{\w}} \abs{f'(z)}$ does not appear in \eqref{original estimate}. In fact, the norms $\sup_{z\in\St_{\w}}\abs{f(z)}+\abs{zf'(z)}$ and $\norm{f}_{\HT^{\infty}_{1}\!(\St_{\w})}$ are equivalent, since $0\in\St_{\w}$ for all $\w>0$. So for generators of unbounded groups \eqref{original estimate} does not yield an essentially better estimate than Theorem \ref{main functional calculus result}. This is different for generators of uniformly bounded groups, since the norm equivalence of $\sup_{z\in\St_{\w}}\abs{f(z)}+\abs{zf'(z)}$ and $\norm{f}_{\HT^{\infty}_{1}\!(\St_{\w})}$ fails as $\w\downarrow 0$. Hence for generators of uniformly bounded groups \eqref{original estimate} yields a strictly stronger result on $\D_{A}(\theta,q)$ than Theorem \ref{main functional calculus result}.
\end{remark}

\begin{remark}\label{fractional domain inclusion}
Let $\lambda\in\C$ with $\Real(\lambda)>\w$. By \cite[Corollary 6.6.3]{Haase1}, $\D((\lambda-\ui A)^{\alpha})\subseteq \D_{A}(\alpha,\infty)$ for each $\alpha\in(0,1)$. Hence Theorem \ref{main functional calculus result} yields $f(A)(\lambda-\ui A)^{-\alpha}\in\La(X)$ for all $\w>\theta(U)$, $f\in\HT^{\infty}_{1}(\St_{\w})$ and $\alpha>0$. However, this already follows from \cite[Proposition 8.2.3]{ABHN2011} in a similar manner as in \cite[Remark 5.2]{Haase-Rozendaal2013}. Moreover, using arguments as in \cite[Remark 3.9]{Haase-Rozendaal2013}, \cite[Proposition 8.2.3]{ABHN2011} already implies that $f(A):\D_{A}(\theta,q)\rightarrow \D_{A}(\theta',q')$  is bounded for all $\theta'<\theta$ and $q,q'\in[1,\infty]$. The improvement that Theorem \ref{main functional calculus result} provides lies in going from $\theta'<\theta$ to $\theta'=\theta$.
\end{remark}

\begin{remark}\label{other multiplier results 2}
As already noted in Remark \ref{other multiplier results}, we could have used Fourier multiplier results on Besov spaces other than Corollary \ref{mikhlin condition}. These lead to statements about the boundedness of functional calculi for other function algebras.
\end{remark}

For $\ph\in(0,\pi)$ define
\begin{align}\label{sector}
\Se_{\ph}:=\left\{z\in\C\mid \abs{\arg(z)}<\ph\right\},
\end{align}
and for $\psi\in(0,\pi/2)$ and $\w>0$,
\begin{align*}
\Sigma_{\psi}:=\Se_{\psi}\cup -\Se_{\psi},\qquad \mathrm{V}_{\psi,\w}:=\St_{\w}\cup \Sigma_{\psi}.
\end{align*}

\begin{lemma}\label{no derivative}
Let $\w>\w'>0$ and $\psi\in(0,\pi/2)$. Then $\HT^{\infty}\!(\mathrm{V}_{\w,\psi})$ is continuously embedded in $\HT^{\infty}_{1}\!(\St_{\w'})$.
\end{lemma}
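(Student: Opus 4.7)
The plan is to use Cauchy's integral formula for $f'$ on suitable discs, whose radii depend on the distance of the evaluation point to the boundary of $\mathrm{V}_{\w,\psi}$.

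Since $\St_{\w'}\subseteq \St_{\w}\subseteq \mathrm{V}_{\w,\psi}$, the supremum part of the $\HT^{\infty}_{1}$-norm is immediately controlled: $\sup_{z\in\St_{\w'}}\abs{f(z)}\leq \norm{f}_{\HT^{\infty}\!(\mathrm{V}_{\w,\psi})}$. The content is therefore to bound $(1+\abs{z})\abs{f'(z)}$ uniformly on $\St_{\w'}$, and for this I would split into two regimes according to $\abs{z}$.

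For $\abs{z}$ bounded, say $\abs{z}\leq R$ for some $R$ to be fixed below, the point $z\in\St_{\w'}$ has distance at least $\w-\w'>0$ to the complement of $\St_{\w}\subseteq \mathrm{V}_{\w,\psi}$. Cauchy's formula on the disc of radius $\w-\w'$ around $z$ gives $\abs{f'(z)}\leq (\w-\w')^{-1}\norm{f}_{\HT^{\infty}\!(\mathrm{V}_{\w,\psi})}$, so $(1+\abs{z})\abs{f'(z)}\leq (1+R)(\w-\w')^{-1}\norm{f}_{\HT^{\infty}\!(\mathrm{V}_{\w,\psi})}$.

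For $\abs{z}$ large, the key geometric observation is that for $z=x+\ui y\in\St_{\w'}$ we have $\abs{y}<\w'$, so once $\abs{x}$ is large enough (larger than $\w'/\tan(\psi)$) the point $z$ lies deep inside the double sector $\Sigma_{\psi}$, at distance at least $\abs{x}\sin\psi-\w'\cos\psi$ from the boundary rays of $\Sigma_{\psi}$. Choosing $R$ so that this distance is at least a fixed multiple of $\abs{z}$ (using that on $\St_{\w'}$ one has $\abs{z}\leq\abs{x}+\w'$), Cauchy's formula on a disc of radius $c\abs{z}$ contained in $\Sigma_{\psi}\subseteq \mathrm{V}_{\w,\psi}$ yields $\abs{f'(z)}\leq (c\abs{z})^{-1}\norm{f}_{\HT^{\infty}\!(\mathrm{V}_{\w,\psi})}$, hence $(1+\abs{z})\abs{f'(z)}\leq 2c^{-1}\norm{f}_{\HT^{\infty}\!(\mathrm{V}_{\w,\psi})}$.

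Combining the two regimes produces a constant $C=C(\w,\w',\psi)$ for which $\norm{f}_{\HT^{\infty}_{1}\!(\St_{\w'})}\leq C\norm{f}_{\HT^{\infty}\!(\mathrm{V}_{\w,\psi})}$, yielding the claimed continuous embedding. The only real subtlety is selecting the threshold $R$ and the proportionality constant $c$ compatibly so that the two regimes cover $\St_{\w'}$ with matching bounds; this is a routine geometric computation once the opening angle $\psi$ and the strip widths are fixed.
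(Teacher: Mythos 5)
Your argument is correct. The paper itself gives no details here: it simply defers to Lemma 4.5 of \cite{Haase5}, which is an embedding of $\HT^{\infty}(\mathrm{V}_{\w,\psi})$ into the (homogeneous) analytic Mikhlin algebra on the smaller strip, proved there by exactly the kind of Cauchy estimates you use. Your write-up is therefore a self-contained version of the intended proof rather than a genuinely different route. The two regimes are set up correctly: for every $z\in\St_{\w'}$ the disc $D(z,\w-\w')$ lies in $\St_{\w}$, which handles bounded $\abs{z}$, and for $\abs{z}$ large the distance of $z=x+\ui y$ to the rays $\arg w=\pm\psi$ is $\abs{x\sin\psi \mp y\cos\psi}\geq \abs{x}\sin\psi-\w'\cos\psi$, which together with $\abs{x}\geq\abs{z}-\w'$ gives a disc of radius comparable to $\abs{z}$ inside the sector; the threshold $R$ and the constant $c$ can indeed be chosen compatibly (e.g.\ $c=\tfrac{1}{2}\sin\psi$ and $R$ a fixed multiple of $\w'/\sin\psi$), and the two cases overlap so there is no gap near the transition. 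One cosmetic point: since $0\in\St_{\w'}$, your bound $(1+\abs{z})\abs{f'(z)}\lesssim\norm{f}_{\HT^{\infty}(\mathrm{V}_{\w,\psi})}$ is equivalent to the $\sup_z\abs{zf'(z)}$ bound stated in \cite{Haase5}, which is why the paper can quote that lemma verbatim; your version directly produces the inhomogeneous norm used in \eqref{mikhlin norm}, so nothing further is needed.
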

\begin{proof}
This follows in a straightforward manner from Lemma 4.5 in \cite{Haase5}.
\end{proof}

\begin{corollary}\label{venturi region result}
Let $-\ui A$ be the generator of a $C_{0}$-group $(U(s))_{s\in\R}$ on a Banach space $X$ and let $\theta\in(0,1)$ and $q\in[1,\infty]$. Then $A_{\theta,q}$ has a bounded $\HT^{\infty}\!(\mathrm{V}_{\w,\psi})$-calculus for all $\w>\theta(U)$ and $\psi\in(0,\pi/2)$.
\end{corollary}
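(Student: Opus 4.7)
The plan is to combine Lemma \ref{no derivative} with Theorem \ref{main functional calculus result} in an essentially one-line reduction. Fix $\w > \theta(U)$ and $\psi \in (0,\pi/2)$, and choose an auxiliary $\w' \in (\theta(U), \w)$. For $f \in \HT^{\infty}\!(\mathrm{V}_{\w,\psi})$, the restriction $f|_{\St_{\w'}}$ is bounded holomorphic on $\St_{\w'}$, hence lies in $\Ma_{A_{\theta,q}}(\St_{\w'})$ (regularizable by $z \mapsto (\lambda-z)^{-2}$ for any $\abs{\Imag(\lambda)} > \w'$). By Lemma \ref{functional calculus on interpolation spaces}(a), $A_{\theta,q}$ is itself a strip-type operator of height $\theta(U) < \w'$, so the strip-type calculus for $A_{\theta,q}$ defines $f(A_{\theta,q}) := (f|_{\St_{\w'}})(A_{\theta,q})$. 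This is the natural meaning of ``the $\HT^{\infty}\!(\mathrm{V}_{\w,\psi})$-calculus'' in the present framework.

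For the estimate, Lemma \ref{no derivative} yields a constant $C' \geq 0$ (depending only on $\w,\w',\psi$) such that
\begin{align*}
\norm{f|_{\St_{\w'}}}_{\HT^{\infty}_{1}\!(\St_{\w'})} \leq C'\,\norm{f}_{\HT^{\infty}\!(\mathrm{V}_{\w,\psi})}.
\end{align*}
Applying Theorem \ref{main functional calculus result} at the height $\w' > \theta(U)$ then gives a constant $C'' \geq 0$ with
\begin{align*}
\norm{f(A_{\theta,q})}_{\La(\D_{A}(\theta,q))} \leq C''\, \norm{f|_{\St_{\w'}}}_{\HT^{\infty}_{1}\!(\St_{\w'})} \leq C'C''\,\norm{f}_{\HT^{\infty}\!(\mathrm{V}_{\w,\psi})},
\end{align*}
which is precisely the boundedness of the $\HT^{\infty}\!(\mathrm{V}_{\w,\psi})$-calculus.

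There is essentially no obstacle here; the only point requiring a brief remark is that the construction is unambiguous, i.e.~that $f(A_{\theta,q})$ does not depend on the particular choice of $\w' \in (\theta(U), \w)$. This is the standard compatibility of strip-type calculi on nested strips: if $\theta(U) < \w'' < \w' < \w$, then a single regularizer $e(z) = (\lambda - z)^{-2}$ with $\abs{\Imag(\lambda)} > \w$ simultaneously regularizes $f|_{\St_{\w'}}$ and $f|_{\St_{\w''}}$, and the Cauchy integrals defining $e(A_{\theta,q})$ and $(ef|_{\St_{\w'}})(A_{\theta,q})$ are independent of the choice of contour by the usual contour-shift argument.
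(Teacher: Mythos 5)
Your proposal is correct and is exactly the argument the paper intends: the corollary is stated without proof as an immediate consequence of Lemma \ref{no derivative} (embedding $\HT^{\infty}\!(\mathrm{V}_{\w,\psi})$ into $\HT^{\infty}_{1}\!(\St_{\w'})$ for an intermediate $\w'\in(\theta(U),\w)$) combined with Theorem \ref{main functional calculus result}. Your additional remark on the independence of the construction from the choice of $\w'$ is standard and harmless.
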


So far we have considered functional calculus on interpolation spaces for the couple $(X,\D(A))$. The next corollary extends our results to other interpolation couples.

\begin{corollary}\label{other interpolation spaces}
Let $-\ui A$ be the generator of a $C_{0}$-group $\left(U(s)\right)_{s\in\R}$ on a Banach space $X$ and let $\theta\in(0,1)$, $q\in[1,\infty]$ and $m,n\in\N_{0}$ with $m\neq n$. Then the part of $A$ in $(\D(A^{m}),\D(A^{n}))_{\theta,q}$ has a bounded $\HT^{\infty}_{1}\!(\St_{\w})$-calculus for all $\w>\theta(U)$. If $(U(s))_{s\in\R}$ is uniformly bounded then the constant bounding the calculus is independent of $\w>0$.
\end{corollary}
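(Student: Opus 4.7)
By symmetry $(\D(A^m), \D(A^n))_{\theta, q} = (\D(A^n), \D(A^m))_{1-\theta, q}$, I may assume $m < n$. A convenient first reduction is a shift via a power of the resolvent. For any $\lambda \in \rho(A)$, the resolvent $R(\lambda, A)$ is a Banach-space isomorphism $\D(A^k) \to \D(A^{k+1})$ for every $k \in \N_{0}$ (the inverse being $\lambda - A$, and the graph norms controlled by the bound on $R(\lambda,A)$), and it commutes with $A$ hence with $f(A)$ for every $f \in \Ma_{A}(\St_{\w})$. Iterating and using real interpolation, $R(\lambda, A)^{m}$ is an isomorphism $(X, \D(A^{N}))_{\theta, q} \to (\D(A^{m}), \D(A^{n}))_{\theta, q}$, where $N := n - m \ge 1$, intertwining the functional calculi. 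Thus it suffices to prove the result for the couple $(X, \D(A^{N}))$.

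For $N = 1$ this is exactly Theorem \ref{main functional calculus result}. For $N > 1$, I use the fact that, for every $k \in \N_{0}$, the group $(U(s))_{s \in \R}$ commutes with $A^{k}$ and so restricts to a $C_{0}$-group on $\D(A^{k})$ with the graph norm, of type at most $\theta(U)$ and uniformly bounded with the same bound whenever $(U(s))_{s \in \R}$ is. Its generator is $-\ui A_{\D(A^{k})}$, the part of $A$ in $\D(A^{k})$, and iterating the definition of "part" gives $\D(A_{\D(A^{k})}) = \D(A^{k+1})$. Applying Theorem \ref{main functional calculus result} to this restricted group therefore furnishes, for every $k \in \N_{0}$ and $\sigma \in (0, 1)$, a bounded $\HT^{\infty}_{1}(\St_{\w})$-calculus on $(\D(A^{k}), \D(A^{k+1}))_{\sigma, q}$ for the part of $A$ (with compatibility via Lemma \ref{functional calculus on interpolation spaces}~a)), and with $\w$-independent constant in the uniformly bounded case.

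Now set $s := N\theta \in (0, N)$ and suppose first $s \notin \Z$; write $k := \lfloor s \rfloor \in \{0, \ldots, N-1\}$ and $\sigma := s - k \in (0, 1)$. The Reiteration Theorem \cite[Theorem 3.5.3]{Bergh-Lofstrom}, together with the class identification $\D(A^{j}) \in \Ce_{j/N}(X, \D(A^{N}))$ for each $0 \le j \le N$, yields
\[
(X, \D(A^{N}))_{\theta, q} = (\D(A^{k}), \D(A^{k+1}))_{\sigma, q}
\]
with equivalent norms, so the preceding paragraph concludes the argument. The finitely many integer values $s \in \{1, \ldots, N-1\}$ are treated by approximating $\theta$ with nearby parameters giving $N\theta \notin \Z$ and invoking the Convergence Lemma \ref{convergence lemma} (whose bound is stable under the approximation), or by one further reiteration through two non-integer parameters.

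The main obstacle is the class identification $\D(A^{j}) \in \Ce_{j/N}(X, \D(A^{N}))$, i.e., the two-sided embedding
\[
(X, \D(A^{N}))_{j/N, 1} \hookrightarrow \D(A^{j}) \hookrightarrow (X, \D(A^{N}))_{j/N, \infty}.
\]
This is not developed in the excerpt but follows from the fractional-power theory for the invertible sectorial operator $\w_{0} - \ui A$ (for any $\w_{0} > \theta(U)$), whose integer domains coincide with those of $A$; once this input is in place, the rest is a routine reduction to Theorem \ref{main functional calculus result}.
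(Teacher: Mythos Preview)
Your proof is correct and follows essentially the same route as the paper: symmetry reduction to $m<n$, resolvent isomorphism to reduce to $m=0$, identification of $(X,\D(A^{N}))_{\theta,q}$ with a space $(\D(A^{k}),\D(A^{k+1}))_{\sigma,q}$ when $N\theta\notin\N$, and a further reiteration for the integer case. The paper handles the class identification you flag as the ``main obstacle'' by citing \cite[Lemma 3.1.3 and Proposition 3.1.8]{Lunardi} directly, rather than invoking fractional powers; and once on $(\D(A^{k}),\D(A^{k+1}))_{\sigma,q}$ it applies one more resolvent similarity to reduce to $k=0$ instead of (as you do, equivalently) applying Theorem~\ref{main functional calculus result} to the restricted group on $\D(A^{k})$.

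One small correction: your first suggestion for the integer case --- ``approximating $\theta$ with nearby parameters \ldots\ and invoking the Convergence Lemma~\ref{convergence lemma}'' --- does not work, since that lemma concerns pointwise convergence of a net of functions $f_{j}$ on a fixed space, not varying the interpolation parameter. Your alternative (``one further reiteration through two non-integer parameters'') is exactly what the paper does, writing $(X,\D(A^{n}))_{\theta,q}$ as $\bigl((\D(A^{k-1}),\D(A^{k}))_{1/2,q},(\D(A^{k}),\D(A^{k+1}))_{1/2,q}\bigr)_{1/2,q}$ and interpolating.
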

\begin{proof}
First note that since
\begin{align*}
(\D(A^{m}),\D(A^{n}))_{\theta,q}=(\D(A^{n}),\D(A^{m}))_{1-\theta,q}
\end{align*}
by \cite[Theorem 3.4.1]{Bergh-Lofstrom}, we may assume that $m<n$. Using the similarity transform $R(\lambda,A)^{m}:X\rightarrow \D(A^{m})$, it suffices to let $m=0$. Suppose that $n\theta\notin \N$. By Lemma 3.1.3 and Proposition 3.1.8 in \cite{Lunardi},
\begin{align*}
(X,\D(A^{n}))_{\theta,q}=(\D(A^{k}),\D(A^{k+1}))_{\theta',q}
\end{align*}
for some $k\in\N_{0}$ and $\theta'\in(0,1)$. Another similarity transform shows that we can let $k=0$. Now Theorem \ref{main functional calculus result} yields the statement. 

If $k:=n\theta\in\N$, the Reiteration Theorem \cite[Theorem 3.5.3]{Bergh-Lofstrom} yields
\begin{align*}
(X,\D(A^{n}))_{\theta,q}=\left((\D(A^{k-1}),\D(A^{k}))_{1/2,q}, (\D(A^{k}),\D(A^{k+1}))_{1/2,q}\right)_{1/2,q}.
\end{align*}
By what we have already shown and \eqref{interpolation inequality}, this concludes the proof.
\end{proof}

\section{Additional results}\label{additional results}

We now deduce several applications of Theorem \ref{main functional calculus result}. Corollary \ref{other interpolation spaces} can be applied in this section to yield results for other interpolation couples.

We first state a proposition about the convergence of certain principal value integrals, an interpolation version of \cite[Theorem 4.4]{Haase5} on general Banach spaces. If $g\in \Ell^{1}[-1,1]$ is even then by $\mathrm{PV}-g(s)/s$ we mean the distribution defined by
\begin{align*}
\langle \mathrm{PV}-g(s)/s,\ph\rangle:=\lim_{\epsilon\searrow 0}\int_{\epsilon\leq|s|\leq1}g(s)\ph(s)\frac{\ud s}{s}
\end{align*}
for $\ph\in \Ce^{\infty}\!(\R)$ compactly supported. By $\mathrm{BV}[-1,1]$ we denote the functions of bounded variation on $[-1,1]$. 

\begin{proposition}\label{principal value result}
Let $-\ui A$ be the generator of a $C_{0}$-group $(U(s))_{s\in\R}$ on a Banach space $X$. Let $g\in \mathrm{BV}[-1,1]$ be even and set $f:=\mathcal{F}(\mathrm{PV}-g(s)/s)$. Then $f(A_{\theta,q})\in\La(\D_{A}(\theta,q))$ and
\begin{align}\label{principal value identity}
f(A)x=\lim_{\epsilon\searrow 0}\int_{\epsilon\leq |s|\leq 1}g(s)U(s)x\,\frac{\ud s}{s}
\end{align}
for all $\theta\in(0,1)$, $q\in[1,\infty)$ and $x\in \D_{A}(\theta,q)$.
\end{proposition}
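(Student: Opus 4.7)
The plan is to identify $f$ explicitly using the evenness of $g$, verify that $f$ and its natural truncated approximants $f_{\epsilon}$ all lie in $\HT^{\infty}_{1}\!(\St_{\w})$ with a uniform norm bound, and then transfer pointwise convergence into strong operator convergence via the Convergence Lemma combined with Theorem \ref{main functional calculus result}.

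First, pairing the integrand at $s$ and $-s$ and using evenness of $g$, the action of $\mathrm{PV}-g(s)/s$ on $s\mapsto \ue^{-\ui sz}$ simplifies to
\begin{align*}
f(z) = -2\ui \int_{0}^{1} g(s)\,\frac{\sin(sz)}{s}\,\ud s,
\end{align*}
an entire function. The function $\mathrm{Si}(w):=\int_{0}^{w}\sin(u)/u\,\ud u$ is entire and bounded on every strip $\St_{\w}$, so integrating by parts in $s$ (legitimate since $g\in\mathrm{BV}[-1,1]$) with $\partial_{s}\mathrm{Si}(sz)=\sin(sz)/s$ gives
\begin{align*}
f(z) = -2\ui\, g(1)\,\mathrm{Si}(z) + 2\ui\int_{0}^{1}\mathrm{Si}(sz)\,\ud g(s),
\end{align*}
bounded on $\St_{\w}$ in terms of $\w$, $\norm{g}_{\infty}$, and the total variation of $g$. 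Similarly $f'(z)=-2\ui\int_{0}^{1}g(s)\cos(sz)\,\ud s$ is handled by the antiderivative $\sin(sz)/z$ of $\cos(sz)$, producing a factor of $1/\abs{z}$ for $\abs{z}\geq 1$; combined with the crude bound $\abs{f'(z)}\leq 2\norm{g}_{\infty}\cosh(\w)$ for $\abs{z}\leq 1$, this yields $\sup_{z\in\St_{\w}}(1+\abs{z})\abs{f'(z)}<\infty$. Hence $f\in\HT^{\infty}_{1}\!(\St_{\w})$ for every $\w>0$, and Theorem \ref{main functional calculus result} gives $f(A_{\theta,q})\in\La(\D_{A}(\theta,q))$.

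For the limit formula, let $\mu_{\epsilon}\in\eM(\R)$ be the measure with density $g(s)\mathbf{1}_{\epsilon\leq\abs{s}\leq 1}/s$, and set $f_{\epsilon}:=\F\mu_{\epsilon}$. The same integration-by-parts computation applied to $f_{\epsilon}(z)=-2\ui\int_{\epsilon}^{1}g(s)\sin(sz)/s\,\ud s$ produces an additional boundary term $-g(\epsilon)\mathrm{Si}(\epsilon z)$ that is uniformly controlled by $\norm{g}_{\infty}\norm{\mathrm{Si}}_{\HT^{\infty}\!(\St_{\w})}$, yielding
\begin{align*}
\sup_{\epsilon\in(0,1)}\norm{f_{\epsilon}}_{\HT^{\infty}_{1}\!(\St_{\w})}<\infty.
\end{align*}
By the definition of the principal value, $f_{\epsilon}(z)\to f(z)$ for every $z\in\St_{\w}$.

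Fix $\w>\theta(U)$. Since $s\mapsto g(s)\mathbf{1}_{\epsilon\leq\abs{s}\leq 1}/s$ lies in $\Ell^{1}(\R)$, the Hille--Phillips calculus (Lemma \ref{decay implies fourier transform}) gives $f_{\epsilon}(A)x=\int_{\epsilon\leq\abs{s}\leq 1}g(s)U(s)x\,\ud s/s$, and by Lemma \ref{functional calculus on interpolation spaces}(a) this equals $f_{\epsilon}(A_{\theta,q})x$ for $x\in\D_{A}(\theta,q)$. Since $q<\infty$, Lemma \ref{functional calculus on interpolation spaces}(b) ensures that $A_{\theta,q}$ is densely defined, so the Convergence Lemma \ref{convergence lemma} applies---the required uniform $\La(\D_{A}(\theta,q))$-bound on the net $(f_{\epsilon}(A_{\theta,q}))$ being furnished by Theorem \ref{main functional calculus result} together with the $\HT^{\infty}_{1}$-bound above---to give $f_{\epsilon}(A_{\theta,q})\to f(A_{\theta,q})$ strongly on $\D_{A}(\theta,q)$, and hence in $X$. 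This yields \eqref{principal value identity}. The key technical point is the uniform $\HT^{\infty}_{1}$-bound on $f_{\epsilon}$: the pointwise estimate $\abs{\sin(sz)/s}\leq\cosh(\w)/s$ would integrate to a logarithmic divergence, so one must exploit the cancellation encoded in $\mathrm{Si}$ via integration by parts.
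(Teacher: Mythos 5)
Your proof is correct and follows essentially the same route as the paper: establish $f\in\HT^{\infty}_{1}\!(\St_{\w})$, invoke Theorem \ref{main functional calculus result}, and obtain \eqref{principal value identity} from the Convergence Lemma applied to the truncations $f_{\epsilon}=\F\mu_{\epsilon}$. The only difference is that the paper delegates the membership $f\in\HT^{\infty}_{1}\!(\St_{\w})$ and the uniform bound on the truncations to \cite[Lemma 4.3, Theorem 4.4]{Haase5}, whereas you supply these details directly via the $\mathrm{Si}$-function integration by parts.
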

\begin{proof}
By \cite[Lemma 4.3]{Haase5}, $f\in\HT^{\infty}_{1}\!(\St_{\w})$ for all $\w>0$. Theorem \ref{main functional calculus result} now yields the first statement. For \eqref{principal value identity} we may let $q<\infty$, since $\D_{A}(\theta,\infty)\subseteq\D_{A}(\theta',1)$ for $\theta'<\theta$ \cite[Proposition 1.1.4]{Lunardi}. Now use the Convergence Lemma as in the proof of \cite[Theorem 4.4]{Haase5}.
\end{proof}

\begin{remark}\label{matter of convergence}
Convergence in \eqref{principal value identity} takes place in $\D_{A}(\theta,q)$ for $q<\infty$. For $q=\infty$ the limit and the integral converge in $X$ and in $\D_{A}(\theta',q')$ for $\theta'<\theta$ and $q'\in[1,\infty)$. Compare with Remark \ref{types of convergence}.
\end{remark}


\subsection{Results for sectorial operators and cosine functions}\label{sectorial and cosine}

An operator $A$ on a Banach space $X$ is \emph{sectorial} of angle $\ph\in(0,\pi)$ if $\sigma(A)\subseteq \overline{\Se_{\ph}}$, where $\Se_{\ph}$ is as in \eqref{sector}, and if $\sup\left\{\norm{zR(z,A)}\,\mid\psi\in\C\setminus \Se_{\psi}\right\}<\infty$ for all $\psi\in (\ph,\pi)$. A functional calculus for sectorial operators can be constructed by a method similar to the one used for strip-type operators. For details see \cite[Chapter 2]{Haase1}. 

If $A$ is an injective sectorial operator of angle $\ph\in(0,\pi)$ then $\log(A)$ is defined, as is $f(A)$ for all $f\in \HT^{\infty}\!(\Se_{\psi})$ and $\psi\in(\ph,\pi)$. A sectorial operator $A$ has \emph{bounded imaginary powers} if $A$ is injective and if $-\ui\log(A)$ is the generator of a $C_{0}$-group $(U(s))_{s\in\R}$ on $X$. Then $U(s)=A^{-\ui s}$ for all $s\in\R$, and we write $A\in \mathrm{BIP}(X)$. Moreover, $A$ is sectorial of angle $\theta_{A}:=\theta(U)$, by \cite[Corollary 4.3.4]{Haase1}.

For $\psi\in(0,\pi)$ define $\HT^{\infty}_{\log}(\Se_{\psi})$ to be the unital Banach algebra of all $f\in\HT^{\infty}\!(\Se_{\psi})$ for which 
\begin{align*}
\norm{f}_{\HT^{\infty}_{\log}(\Se_{\psi})}:=\sup_{z\in \Se_{\psi}}\abs{f(z)}+(1+\abs{\log(z)})\abs{z f'(z)}<\infty,
\end{align*}
endowed with the norm $\norm{\cdot}_{\HT^{\infty}_{\log}(\Se_{\psi})}$. 

\begin{proposition}\label{sectorial result}
Let $X$ be a Banach space and $A\in \mathrm{BIP}(X)$ such that $\theta_{A}<\pi$. Let $\theta\in(0,1)$ and $q\in[1,\infty]$. Set $Y:=(X,\D(\log(A)))_{\theta,q}$. Then $A_{Y}$ has a bounded $\HT^{\infty}_{\log}(\Se_{\psi})$-calculus on $Y$ for all $\psi\in(\theta_{A},\pi)$. If $\sup_{s\in\R}\norm{A^{\ui s}}<\infty$ then the constant bounding the calculus is independent of $\psi>0$.
\end{proposition}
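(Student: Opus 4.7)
The plan is to reduce to Theorem \ref{main functional calculus result} by passing from the sectorial operator $A$ to the strip-type operator $B := \log(A)$ via the biholomorphism $\exp\colon \St_\psi \to \Se_\psi$. Since $A \in \mathrm{BIP}(X)$, $-\ui B$ generates the $C_0$-group $(A^{-\ui s})_{s \in \R}$ of group type $\theta(U) = \theta_A$, and $Y = (X, \D(B))_{\theta,q} = \D_B(\theta,q)$. By Lemma \ref{functional calculus on interpolation spaces}, $B_{\theta,q}$ is a strip-type operator of height $\theta_A$ on $Y$, so Theorem \ref{main functional calculus result} applies to $B$ on $Y$.

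Given $\psi \in (\theta_A, \pi)$ and $f \in \HT^{\infty}_{\log}(\Se_\psi)$, set $g(w) := f(\ue^w)$ for $w \in \St_\psi$. Writing $z = \ue^w$ and using the principal branch of the logarithm, one has $g'(w) = z f'(z)$ and $\abs{w} = \abs{\log z}$, so
\[
\norm{g}_{\HT^{\infty}_{1}\!(\St_\psi)} = \sup_{z \in \Se_\psi} \abs{f(z)} + (1 + \abs{\log z})\,\abs{z f'(z)} = \norm{f}_{\HT^{\infty}_{\log}(\Se_\psi)}.
\]
The composition rule relating the sectorial and strip-type functional calculi (see \cite[Chapter 4]{Haase1}) yields $f(A) = g(B)$ whenever either side is defined, and Lemma \ref{functional calculus on interpolation spaces} transfers this to the parts on $Y$:
\[
f(A_Y) = f(A)_Y = g(B)_Y = g(B_{\theta,q}).
\]

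Now Theorem \ref{main functional calculus result} applied to $B$ with $\w = \psi > \theta_A = \theta(U)$ provides a constant $C \geq 0$ such that
\[
\norm{f(A_Y)}_{\La(Y)} = \norm{g(B_{\theta,q})}_{\La(Y)} \leq C \norm{g}_{\HT^{\infty}_{1}\!(\St_\psi)} = C \norm{f}_{\HT^{\infty}_{\log}(\Se_\psi)}.
\]
When $\sup_{s\in\R}\norm{A^{\ui s}} < \infty$, the group $(A^{-\ui s})_{s \in \R}$ is uniformly bounded, so the second statement of Theorem \ref{main functional calculus result} gives $C$ independent of $\psi > 0$. The main delicacy is justifying the composition identity $f(A) = (f \circ \exp)(\log A)$ for general $f \in \HT^{\infty}_{\log}(\Se_\psi)$: for elementary functions this is classical, and the extension to all $f$ in the algebra proceeds by regularization, since a strip-calculus regularizer such as $(\lambda - \log A)^{-2}$ pulls back under $z = \ue^w$ to an admissible regularizer in the sectorial calculus of $A$.
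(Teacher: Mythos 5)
Your proposal is correct and follows essentially the same route as the paper: both reduce to Theorem \ref{main functional calculus result} via the isometric isomorphism $f\mapsto f\circ\exp$ between $\HT^{\infty}_{\log}(\Se_{\psi})$ and $\HT^{\infty}_{1}\!(\St_{\psi})$, the composition rule $f(A)=(f\circ\exp)(\log A)$ for the sectorial and strip calculi, and Lemma \ref{functional calculus on interpolation spaces} to pass to the parts on $Y$. The paper simply cites Theorem 4.2.4 and Proposition 6.1.2 of \cite{Haase1} for the composition rule where you sketch the regularization argument directly.
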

\begin{proof}
Let $\psi\in(\theta_{A},\pi)$ be given and note that $f\mapsto f\circ\log$ is an isometric algebra isomorphism $\HT^{\infty}_{1}\!(\St_{\psi})\rightarrow \HT^{\infty}_{\log}(\Se_{\psi})$. By Lemma \ref{functional calculus on interpolation spaces} as well as Theorem 4.2.4 and Proposition 6.1.2 from \cite{Haase1},
\begin{align*}
f(\log(A)_{Y})=f(\log(A))_{Y}=(f\circ\log)(A)_{Y}=(f\circ \log)(A_{Y})
\end{align*}
for all $f\in \HT^{\infty}_{1}\!(\St_{\psi})$. Now Theorem \ref{main functional calculus result} concludes the proof.
\end{proof}

\begin{remark}\label{Dore's result}
Let $A$ be an injective sectoral operator of angle $\ph\in(0,\pi)$, and let $\alpha>0$, $\theta\in(0,1)$ and $q\in[1,\infty]$. By \cite[Corollary 6.6.3]{Haase1}, a special case of which was proved by Dore \cite[Theorem 3.2]{Dore2001}, the part of $A$ in $(X,\D(A^{\alpha})\cap \Ri(A^{\alpha}))_{\theta,q}$ has a bounded $\HT^{\infty}(\Se_{\psi})$-calculus for all $\psi\in(\ph,\pi)$. Here $\Ri(A)$ is the range of $A$. By \cite[Corollary 6.6.3]{Haase1} and because $\log(A)A^{\alpha\theta}(1+A)^{-2\alpha\theta}\in\La(X)$,
\begin{align*}
(X,\D(A^{\alpha})\cap \Ri(A^{\alpha}))_{\theta,q}\subseteq (X,\D(A^{\alpha}))_{\theta,q}\subseteq \D(A^{\alpha\theta})\subseteq\D(\log(A)),
\end{align*}
and in general $\D(\log(A))$ is strictly included in $(X,\D(\log(A)))_{\theta',q'}$ for all $\theta'\in(0,1)$ and $q'\in[1,\infty]$. Hence the result of Dore does not imply Proposition \ref{sectorial result}.
\end{remark}

A \emph{cosine function} $\mathrm{Cos}:\R\rightarrow \La(X)$ on a Banach space $X$ is a strongly continuous mapping such that $\mathrm{Cos}(0)=I$ and
\begin{align*}
\mathrm{Cos}(t+s)+\mathrm{Cos}(t-s)=2\mathrm{Cos}(t)\mathrm{Cos}(s)
\end{align*}
for all $s,t\in\R$. Then
\begin{align*}
\theta(\mathrm{Cos}):=\inf\left\{\w\geq 0\left|\exists M\geq 0:\norm{\mathrm{Cos}(t)}\leq M\ue^{\w|t|}\textrm{ for all }t\in\R\right.\right\}<\infty.
\end{align*}
The \emph{generator} of a cosine function is the unique operator $-A$ on $X$ that satisfies
\begin{align*}
\lambda R(\lambda^{2},-A)=\int_{0}^{\infty}\ue^{-\lambda t}\mathrm{Cos}(t)\,\ud t
\end{align*}
for $\lambda>\theta(\mathrm{Cos})$. Then $A$ is an operator of \emph{parabola-type} $\w=\theta(\mathrm{Cos})$. This means that $\sigma(A)\subseteq \overline{\varPi_{\w}}$, where $\varPi_{\w}:=\left\{z^{2}\mid z\in \St_{\w}\right\}$, and that for all $\w'>\w$ there exists $M_{\w'}\geq 0$ such that
\begin{align*}
\norm{R(\lambda,A)}\leq \frac{M_{\w'}}{\sqrt{\abs{\lambda}}\left(\abs{\Imag(\sqrt{\lambda})}-\w'\right)}
\end{align*}
for all $\lambda\notin \varPi_{\w'}$. For such operators there is a natural functional calculus, as before, and a version of Lemma \ref{functional calculus on interpolation spaces} holds. For details see \cite{Haase13}. For $\w>0$ let
\begin{align*}
\HT^{\infty}_{1}\!(\varPi_{\w}):=\left\{f\in \HT^{\infty}\!(\varPi_{\w})\left| \norm{f}_{\HT^{\infty}_{1}\!(\varPi_{\w})}:=\sup_{z\in\varPi_{\w}}\abs{f(z)}+(1+\abs{z})\,\abs{f'(z)}<\infty\right.\right\},
\end{align*}
a Banach algebra when endowed with the norm $\norm{\cdot}_{\HT^{\infty}_{1}\!(\varPi_{\w})}$.

\begin{proposition}\label{cosine function result}
Let $-A$ be the generator of a cosine function $\mathrm{Cos}$ on a Banach space $X$ and let $\theta\in(0,1)$, $q\in[1,\infty]$. Then the part $A_{\theta,q}$ of $A$ in $\D_{A}(\theta,q)$ has a bounded $\HT^{\infty}_{1}\!(\varPi_{\w})$-calculus for all $\w>\theta(\mathrm{Cos})$. If $\sup_{s\in\R}\norm{\mathrm{Cos}(s)}<\infty$ then the constant bounding the calculus is independent of $\w>0$.
\end{proposition}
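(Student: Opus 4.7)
The plan is to reduce Proposition \ref{cosine function result} to Theorem \ref{main functional calculus result} via the squaring map $\mathrm{sq}\colon\St_{\w}\to\varPi_{\w}$, $z\mapsto z^{2}$. First I would verify that composition with $\mathrm{sq}$ is a bounded unital algebra homomorphism
\[
\HT^{\infty}_{1}\!(\varPi_{\w})\longrightarrow\HT^{\infty}_{1}\!(\St_{\w}),\qquad f\longmapsto f\circ\mathrm{sq},
\]
with operator norm bounded by an absolute constant independent of $\w$. Indeed $\|f\circ\mathrm{sq}\|_{\infty}=\|f\|_{\infty}$, and the elementary bound $2|z|(1+|z|)\leq 3(1+|z|^{2})$ yields
\[
(1+|z|)\,\bigl|(f\circ\mathrm{sq})'(z)\bigr|=2|z|(1+|z|)\,|f'(z^{2})|\leq 3\bigl(1+|\mathrm{sq}(z)|\bigr)\,|f'(\mathrm{sq}(z))|
\]
for every $z\in\St_{\w}$.

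Second, I would invoke the phase-space reduction for cosine functions from \cite{Haase13}. Associated with the generator $-A$ of $\mathrm{Cos}$ there is a Banach space $V\supseteq X$ of Kisy\'nski type (essentially $\D(A^{1/2})\times X$) and a $C_{0}$-group $(\mathcal{U}(s))_{s\in\R}\subseteq\La(V)$ whose generator is of the form $-\ui\mathcal{B}$, with $\theta(\mathcal{U})=\theta(\mathrm{Cos})$ and $\mathcal{B}^{2}$ acting as $A$ on $X$ after the identification of $X$ as a component of $V$. The parabola-type calculus of $A$ and the strip-type calculus of $\mathcal{B}$ are intertwined by the composition rule $f(A)=(f\circ\mathrm{sq})(\mathcal{B})|_{X}$ for $f\in\HT^{\infty}_{1}(\varPi_{\w})$, and the interpolation spaces correspond so that $\D_{A}(\theta,q)$ embeds isomorphically into a suitable real interpolation space for $\mathcal{B}$ associated with the couple $(V,\D(\mathcal{B}^{2}))$. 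Corollary \ref{other interpolation spaces} applied to $\mathcal{B}$ on this couple then furnishes a bounded $\HT^{\infty}_{1}\!(\St_{\w})$-calculus on that interpolation space.

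Combining the two ingredients, for every $f\in\HT^{\infty}_{1}(\varPi_{\w})\cap\E(\varPi_{\w})$ one obtains
\[
\|f(A_{\theta,q})\|\leq C_{1}\,\|f\circ\mathrm{sq}\|_{\HT^{\infty}_{1}(\St_{\w})}\leq C_{2}\,\|f\|_{\HT^{\infty}_{1}(\varPi_{\w})},
\]
and a Convergence Lemma argument (analogous to the one at the end of the proof of Theorem \ref{main functional calculus result}, using a parabola-type analogue of the regularisers $\tau_{k}$) extends the bound to all $f\in\HT^{\infty}_{1}(\varPi_{\w})$. When $\mathrm{Cos}$ is uniformly bounded the induced group $\mathcal{U}$ on $V$ is uniformly bounded as well, so Theorem \ref{main functional calculus result} supplies a constant independent of $\w>0$. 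The main obstacle is the technical identification of the phase space and the careful matching of the parabola-type interpolation calculus of $A$ with the strip-type interpolation calculus of $\mathcal{B}$ via the composition rule; once the framework of \cite{Haase13} is invoked, what remains is essentially the algebraic substitution $z\mapsto z^{2}$ together with Theorem \ref{main functional calculus result} and Corollary \ref{other interpolation spaces}.
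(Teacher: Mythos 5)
Your proposal follows essentially the same route as the paper: pass to the Kisy\'nski phase space, on which the cosine function lifts to a $C_{0}$-group of the same type, transfer the calculus through the substitution $z\mapsto z^{2}$ (with the same elementary estimate showing that $f\mapsto f(\cdot^{2})$ maps $\HT^{\infty}_{1}\!(\varPi_{\w})$ boundedly into $\HT^{\infty}_{1}\!(\St_{\w})$ uniformly in $\w$), and then invoke Theorem \ref{main functional calculus result}. The only cosmetic differences are that the paper interpolates the couple $(X\times V,\D(\mathcal{A}))$ directly, which gives $\D_{\mathcal{A}}(\theta,q)=\D_{A}(\theta,q)\times V$ and so requires neither Corollary \ref{other interpolation spaces} for the second-order domain nor a separate Convergence Lemma step, the composition rule already identifying $f(A_{\theta,q})\oplus f(A_{V})$ with $g(\mathcal{A}_{Y})$ for all $f\in\HT^{\infty}_{1}\!(\varPi_{\w})$.
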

\begin{proof}
We mainly follow \cite[Theorem 5.3]{Haase5}, providing extra details where necessary. There is a unique subspace $V\subseteq X$, the \emph{Kisy\'{n}ski space}, such that the operator $-\ui\mathcal{A}$,
\begin{align*}
\mathcal{A}:=\ui\left[\begin{array}{cc}0&\I_{V}\\-A&0\end{array}\right],
\end{align*}
with domain $\D(\mathcal{A}):=\D(A)\times V$, generates a $C_{0}$-group $(U(s))_{s\in\R}$ on $X\times V$. Moreover, $\theta(\mathrm{Cos})=\theta(U)$ \cite[Theorem 6.2]{Haase4}. 

Let $\w>\theta(\mathrm{Cos})$. Then $f\in \HT^{\infty}\!(\varPi_{\w})$ is an element of $\HT^{\infty}_{1}\!(\varPi_{\w})$ if and only if $[z\mapsto g(z):=f(z^{2})]\in \HT^{\infty}_{1}\!(\St_{\w})$, with $\norm{g}_{\HT^{\infty}_{1}\!(\St_{\w})}\leq 4\norm{f}_{\HT^{\infty}_{1}\!(\varPi_{\w})}$. Moreover, $f(A)\oplus f(A_{V})=g(\mathcal{A})$ and
\begin{align*}
f(A_{\theta,q})\oplus f(A_{V})=(f(A)\oplus f(A_{V}))_{Y}=g(\mathcal{A})_{Y}=g(\mathcal{A}_{Y})
\end{align*}
for all $f\in \HT^{\infty}_{1}\!(\varPi_{\w})$, where $Y:=\D_{\mathcal{A}}(\theta,q)=\D_{A}(\theta,q)\times V$. Hence Theorem \ref{main functional calculus result} concludes the proof.
\end{proof}

\subsubsection{Acknowledgements}

We would like to thank our colleagues for interesting discussions and helpful ideas.

\bibliographystyle{plain}
\bibliography{Bibliografie}

\end{document}